\documentclass[a4paper,twoside]{article}

\usepackage[utf8]{inputenc}
\usepackage[T1]{fontenc}
\usepackage[margin=2.5cm]{geometry}

\usepackage[dvipsnames]{xcolor}
\usepackage{hyperref}
\hypersetup{colorlinks,linkcolor={red},citecolor={blue},urlcolor={green!70!black}}
\usepackage{enumitem}

\usepackage{amsfonts,amssymb,bm}
\usepackage{siunitx}
\usepackage{float}

\usepackage{amsmath,empheq}
\numberwithin{equation}{section}

\usepackage{amsthm}
\newtheoremstyle{mytheoremstyle}{7pt}{7pt}{\normalfont}{}{\normalfont\bfseries}{:}{.5em}{}
\theoremstyle{mytheoremstyle}
\newtheorem{definition}{Definition}[section]

\newtheorem{proposition}[definition]{Proposition}
\newtheorem{theorem}[definition]{Theorem}
\newtheorem{corollary}[definition]{Corollary}

\newtheorem{remark}[definition]{Remark}

\usepackage[square,authoryear]{natbib}

\newcommand{\SetFont}[1]{\mathbb{#1}}

\newcommand{\R}{\SetFont{R}}
\newcommand{\LieGroupFont}[1]{\mathsf{#1}}

\newcommand{\SO}{\LieGroupFont{SO}}

\newcommand{\LieAlgebraFont}[1]{\mathfrak{#1}}

\newcommand{\so}{\LieAlgebraFont{so}}

\newcommand{\dif}{\bm{\mathrm{d}}}

\newcommand{\ad}{\mathrm{ad}}
\newcommand{\Ad}{\mathrm{Ad}}
\newcommand{\Id}{\mathrm{Id}}
\newcommand{\Lag}{L}
\newcommand{\lag}{\ell}

\newcommand{\Tr}{\operatorname{Tr}}
\newcommand{\Orb}{\operatorname{Orb}}

\raggedbottom
\setcounter{tocdepth}{3}

\begin{document}

\title{\textbf{Variational discretization of thermodynamical simple systems on Lie groups}}
\author{Benjamin \textsc{Cou\'eraud}\\ CNRS -- LMD -- IPSL\\ \'Ecole Normale Sup\'erieure de Paris -- PSL\\ 24 rue Lhomond, 75005 Paris, France\\ \href{mailto:coueraud@lmd.ens.fr}{\texttt{coueraud@lmd.ens.fr}}\and François \textsc{Gay-Balmaz}\\ CNRS -- LMD -- IPSL\\ \'Ecole Normale Sup\'erieure de Paris -- PSL\\ 24 rue Lhomond, 75005 Paris, France\\ \href{mailto:francois.gay-balmaz@lmd.ens.fr}{\texttt{francois.gay-balmaz@lmd.ens.fr}}}
\date{\it In Honour of J\"urgen Scheurle's 65th Birthday}

\maketitle

\begin{abstract}
This paper presents the continuous and discrete variational formulations of simple thermodynamical systems whose configuration space is a (finite dimensional) Lie group. We follow the variational approach to nonequilibrium thermodynamics developed in \cite{GBYo2017a,GBYo2017b}, as well as its discrete counterpart whose foundations have been laid in \cite{GBYo2017c}. In a first part, starting from this variational formalism on the Lie group, we perform an Euler-Poincar\'e reduction in order to obtain the reduced evolution equations of the system on the Lie algebra of the configuration space. We obtain as corollaries the energy balance and a Kelvin-Noether theorem. In a second part, a compatible discretization is developed resulting in discrete evolution equations that take place on the Lie group. Then, these discrete equations are transported onto the Lie algebra of the configuration space with the help of a group difference map. Finally we illustrate our framework with a heavy top immersed in a viscous fluid modeled by a Stokes flow and proceed with a numerical simulation.
\end{abstract}

\tableofcontents

\section*{Introduction}
\addcontentsline{toc}{section}{Introduction}

In \cite{GBYo2017a} and \cite{GBYo2017b}, a new variational formalism was proposed for nonequilibrium thermodynamics. This formalism is an extension of the Hamilton principle that allows the inclusion of irreversible phenomena in discrete and continuum systems, by using a nonholonomic nonlinear constraint, the so-called \emph{phenomenological constraint}, and its associated \emph{variational constraint}. In this formalism, the entropy of the system is promoted to a full dynamic variable, and to each irreversible process corresponds a \emph{thermodynamic displacement}, whose rate equals the \emph{thermodynamic affinity} of the process. Thanks to the introduction of these variables, together with the phenomenological and variational constraints, this new variational formalism yields the time evolution equations of the system in accordance with the two fundamental laws of thermodynamics, see, e.g., \citet[Chapter 1]{St1974}.

Equipped with such a variational formalism, it is natural to try to devise new \emph{variational integrators} based on this approach, with the aim of developing new algorithms for the simulation of multiphysics systems with the advantages that are known to variational integrators of Lagrangian mechanics, see \cite{MaWe2001}. Indeed, variational integrators were proved to be superior to more classical algorithms thanks to the fact that they were designed to preserve as much as possible the geometric structures underlying the mechanical system they discretize. Some important features are that the discrete symplectic structure of the discrete Lagrangian system, as well as the discrete momenta in case where there are symmetries, are all preserved, and the discrete total energy of the system remains bounded and oscillates around its correct value during the simulation. In \cite{GBYo2017c}, variational integrators for the nonequilibrium thermodynamics of simple closed systems were developed. Because of the presence of thermal effects, the flow of the continuous equations is not symplectic anymore, but rather satisfies a generalized \emph{structure-preserving property}, which reduces to the conservation of the symplectic form if thermal effects are absent. The associated variational integrators satisfy a discrete version of this structure-preserving property.

In this paper, we are still mainly concerned with simple closed systems, but whose configuration space is a Lie group $G$. Such systems may possess symmetries, leading to the natural idea of performing an Euler-Poincar\'e reduction to obtain the reduced time evolution equations of the system on the associated Lie algebra $\LieAlgebraFont{g}$, and discretize the reduced variational principle accordingly. The paper is organized as follows. 

In the first part, starting from the thermodynamical variational principle presented in \cite{GBYo2017a} for a simple system with configuration space a (finite-dimensional) Lie group, we perform an Euler-Poincar\'e reduction (Theorem \ref{thm:TEP-reduction}), in the general case where there is an advected parameter taking values in a arbitrary manifold as in \cite{GBTr2010}. In this setting, the group $G$ \emph{does not act} on the entropy variable $S$, which is a natural assumption for simple systems. As corollaries we obtain the (reduced) energy balance (Corollary \ref{cor:energy-balance}) and an abstract generalization of the well-known Kelvin-Noether of fluid dynamics (Corollary \ref{cor:Kelvin-Noether}). Compared to the usual Euler-Poincar\'e case, the presence of thermal effects is responsible for the presence of additional terms.

The second part deals with the variational discretization of such systems from the point of view of \cite{GBYo2017c} for thermodynamical systems on one hand, and of \cite{MaPeSh1999} for Euler-Poincar\'e systems on the other hand. In Section \ref{sec:DTEP-reduction} we follow \cite{MaPeSh1999} to obtain a discrete Euler-Poincar\'e reduction theorem (Theorem \ref{thm:DTEP-reduction}) for simple thermodynamical systems. We then transport these equations onto the Lie algebra $\LieAlgebraFont{g}$ with the help of a group difference map, as introduced in \citet[Section 4]{BRMa2008}, obtaining in this way an integrator on the Lie algebra $\LieAlgebraFont{g}$ instead of the Lie group $G$, which is more practical from the computational point of view.

Finally we illustrate our framework with one example: a heavy top immersed in a viscous fluid whose flow is approximated by a Stokes flow for practical purposes. This example naturally involves the Lie group $\LieGroupFont{SO}(3)$ of rotations of the Euclidean space $\R^3$, which is the configuration space of the rigid body. The viscous fluid represents the direct physical environment of the rigid body, however, it is actually part of the system we consider, in a way which is similar to the example presented in \citet[Section 5]{GBYo2017c}. Indeed, in the model we will build, we only need one entropy variable to describe the thermodynamics of both the rigid body and the fluid. We write down the continuous system and its variational discretization, which yields an integrator with an interesting energy behavior, as confirmed by our numerical simulations. {This example only illustrates a simplified situation of the general variational setting that we develop in this paper. We postpone the applications of the general setting for future works in the context of fluid thermodynamics}.

\section{Euler-Poincar\'e reduction for simple thermodynamical systems}

In this section we shall present a Lagrangian reduction process for simple thermodynamical systems with symmetries, by focusing on the case when the configuration manifold of the mechanical variables of the system is a Lie group. This process extends to simple thermodynamical systems the well-known process of Euler-Poincar\'e reduction for mechanical systems on Lie groups.

\subsection{Variational formulation of thermodynamics}\label{sec:TVP}

We first review the main points of the variational formalism for nonequilibrium thermodynamics of simple systems that has been introduced in \cite{GBYo2017a}. Recall that by a \emph{simple system} we mean a thermodynamical system for which we only need one entropy variable $S$ and a finite set of mechanical variables $(q,\dot{q})$ in order to describe entirely the state of the system. Moreover, we will assume in the following that such systems don't exchange matter with their environment, that is, they are \emph{closed}. Given such a closed simple system, let $Q$ be the configuration manifold associated to the mechanical variables of the system, assumed to be finite dimensional. The Lagrangian of such a system depends on the position, velocity and entropy of the system, and is therefore a map $\Lag:TQ\times\R\to\R$, $(q, \dot q, S) \mapsto \Lag(q, \dot q, S)$. We also have forces that act on the system: external forces $F^\text{ext}:TQ\times\R\to T^*Q$ that do not derive from a potential and friction forces $F^\text{fr}:TQ\times\R\to T^*Q$ which ultimately encode all the irreversible processes in the simple system and are responsible for internal entropy production. 
These maps are assumed to be fiber preserving, that is, $F^\text{ext}(q,\dot{q},S)$, $F^\text{fr}(q,\dot{q},S)\in T_q^*Q$ for any $q\in Q$, $\dot{q}\in T_q Q$ and $S\in\R$. It is also possible that the system exchanges heat with its environment, and we will denote by $P^\text{ext}_H:TQ\times\R\to\R$ the power due to heat transfer with the exterior of the system. Note that both the external force and the external heat can also depend explicitly on time; however, for simplicity, this will not be the case in this article. 
Now that the various entities have been set up, we can state the variational formulation for nonequilibrium thermodynamics of simple closed systems \citet[Section 3]{GBYo2017a}.

A curve $(q,S)$ satisfies the \emph{variational formulation for nonequilibrium thermodynamics} if and only if it satisfies the \emph{variational condition}
\begin{equation}\label{eq:variational-condition}
	\delta\int_0^T\Lag(q,\dot{q},S)\,\mathrm{d}t+\int_0^T\big\langle F^\text{ext}(q,\dot{q},S),\delta q\big\rangle\,\mathrm{d}t=0,
\end{equation}
for all variations $\delta q$ and $\delta S$ satisfying the \emph{variational constraint}%\footnote{We will call this constraint the \emph{thermodynamical} variational constraint to distinguish it from other constraints that will appear in the forthcoming generalized Euler-Poincar\'e reduction.}
\begin{equation}\label{eq:variational-constraint}
	\frac{\partial\Lag}{\partial S}(q,\dot{q},S)\delta S=\left\langle F^\text{fr}(q,\dot{q},S),\delta q\right\rangle,
\end{equation}
with $ \delta q(0)= \delta q(T)=0$ and if it also satisfies the \emph{phenomenological constraint}
\begin{equation}\label{eq:phenomenological-constraint}
	\frac{\partial\Lag}{\partial S}(q,\dot{q},S)\dot{S}=\left\langle F^\text{fr}(q,\dot{q},S),\dot{q}\right\rangle-P^\text{ext}_H(q,\dot{q},S).
\end{equation}

Note that this variational formulation is an extension of Hamilton's principle of classical mechanics to the thermodynamics of simple systems. The constraint \eqref{eq:phenomenological-constraint} on the curve $(q,S)$ is nonlinear and nonholonomic in $\dot{q}$. The name of \eqref{eq:phenomenological-constraint} comes from the fact that friction forces involve phenomenological laws, which pertain nonequilibrium thermodynamics. The constraint \eqref{eq:variational-constraint} on the variations $ ( \delta q, \delta S)$  follows from  \eqref{eq:phenomenological-constraint} by formally replacing the velocity by the corresponding virtual displacement, and by removing the contribution from the exterior of
the system. Such a simple correspondence between the phenomenological and variational constraints still holds for more
general thermodynamic systems, see \cite{GBYo2017a}.
Taking variations in the variational condition \eqref{eq:variational-condition} and using the constraints \eqref{eq:variational-constraint} and \eqref{eq:phenomenological-constraint}, we obtain the following system of differential equations:
\begin{empheq}[left=\empheqlbrace]{align}
		&\frac{\mathrm{d}}{\mathrm{d}t}\frac{\partial\Lag}{\partial\dot{q}}(q,\dot{q},S)-\frac{\partial\Lag}{\partial q}(q,\dot{q},S)=F^\text{ext}(q,\dot{q},S)+F^\text{fr}(q,\dot{q},S),\label{eq:mechanical-equation}\\[2mm]
		&\frac{\partial\Lag}{\partial S}(q,\dot{q},S)\dot{S}=\left\langle F^\text{fr}(q,\dot{q},S),\dot{q}\right\rangle-P_H^\text{ext}(q,\dot{q},S).\label{eq:thermodynamical-equation}
\end{empheq}

Introducing the \emph{temperature} $T(q,\dot{q},S)=-\frac{\partial\Lag}{\partial S}(q,\dot{q},S)$, which is assumed to be positive, the second equation reads:
\[
	\dot{S}=-\frac{1}{T}\left\langle F^\text{fr}(q,\dot{q},S),\dot{q}\right\rangle+\frac{1}{T}P^\text{ext}_H(q,\dot{q},S),
\]
whose first term is interpreted as the \emph{internal entropy production} of the simple closed system. In case the system is adiabatically closed, there is no heat nor matter transfer with the environment, therefore it is necessary to have $\left\langle F^\text{fr}(q,\dot{q},S),\dot{q}\right\rangle\leq 0$ for any $(q,\dot{q},S)\in TQ\times\R$, meaning that friction forces are \emph{dissipative}, in order for this equation to agree with the second law of thermodynamics, which states that entropy is always increasing for an adiabatically closed system.

The energy associated with the Lagrangian $\Lag$ is the function $E:TQ\times\R\to\R$ defined by
\begin{equation}\label{def:energy}
	E(q,\dot{q},S)=\left\langle\frac{\partial\Lag}{\partial q}(q,\dot{q},S),\dot{q}\right\rangle-\Lag(q,\dot{q},S),
\end{equation}
for all $(q,\dot{q},S)\in TQ\times\R$. Using \eqref{eq:mechanical-equation} and \eqref{eq:thermodynamical-equation}, and defining the power of the external forces as $P_W^\text{ext}(q,\dot{q},S)=\left\langle F^\text{ext}(q,\dot{q},S),\dot{q}\right\rangle$, we recover the general \emph{energy balance}, that is,
\begin{equation}\label{eq:energy-balance}
	\frac{\mathrm{d}E}{\mathrm{d}t}=P_W^\text{ext}(q,\dot{q},S)+P_H^\text{ext}(q,\dot{q},S),
\end{equation}
along a solution $(q,S)$ of the thermodynamical Euler-Lagrange equations. Thus we recover the first law of thermodynamics.

In a nutshell, the variational formalism reviewed above yields the time evolution equations for the thermomechanical system considered, in accordance with the axiomatic formulation of thermodynamics of Stueckelberg (\citet[Chapter 1]{St1974}). We refer the reader to \cite{GBYo2017a,GBYo2017b} for more details on this formalism, historical background, as well as numerous examples.

\subsection{Euler-Poincar\'e reduction}\label{sec:TEP-reduction}

In this section we consider simple thermodynamical systems on Lie groups, meaning that the configuration space $Q$ of the mechanical part of the system is a finite dimensional Lie group $G$. Given $g\in G$, denote by $L_g$ the left multiplication by $g$ in the group $G$, $\LieAlgebraFont{g}$ its Lie algebra, and $\omega_G\in\Omega^1(G,\mathfrak{g})$ its left Maurer-Cartan form \citet[Chapter 3, Definition 1.3]{Sh1997}. Recall that $\omega_G$ is the $\mathfrak{g}  $-valued one-form on $G$ defined by $\omega_G(\dot{g})=T_g L_{g^{-1}}(\dot{g})\in T_eG\cong\LieAlgebraFont{g}$ for any $\dot{g}\in T_gG$. The left action of $G$ on itself can be lifted to $TG$, and we get $TG/G\cong\LieAlgebraFont{g}$, where the diffeomorphism is given by the Maurer-Cartan form $\omega_G$.

Given a $G$-invariant Lagrangian $\Lag:TG\to\R$ on $G$, and the associated Euler-Lagrange equations, it is natural to ask how one can obtain equivalent equations directly on the Lie algebra $\mathfrak{g}$, which is the realization of the \emph{reduced} velocity phase space $TG/G$. This process is called \emph{Euler-Poincar\'e reduction} and the equations obtained in this way are called the \emph{Euler-Poincar\'e equations} on $ \mathfrak{g}$, see \citet[Section 13.5]{MaRa1999} for details as well as an historical overview. Euler-Poincar\'e reduction is a particular instance of Lagrangian reduction, \cite{MaSc1993a,MaSc1993b} and \cite{CeMaRa2001}, in which one considers a Lagrangian $L:TQ \rightarrow \mathbb{R}  $ invariant under the tangent lifted action of a free and proper group action of a Lie group on $Q$.

For finite dimensional simple thermodynamical systems, it is natural to assume that the group $G$ does not act on the entropy variable $S$. Therefore, the reduced (extended) velocity phase space will be $(TG\times\R)/G\cong\LieAlgebraFont{g}\times\R$, the $\R$ factor being the space in which the entropy variable $S$ of the system lives. In the following, we are going to generalize Euler-Poincar\'e reduction by following the variational formalism for nonequilibrium thermodynamics reviewed in the previous section. Instead of generalizing the \emph{basic} Euler-Poincar\'e equations that we just recalled, we will generalize the Euler-Poincar\'e equations with \emph{advected parameters}, which are very useful in applications. These parameters, initially fixed, acquire dynamics after reduction in the form of an \emph{advection equation}. In this case the Lagrangian is only invariant under the isotropy subgroup of a given reference parameter. The case of advected parameters taking values in (the dual of) a vector space has been studied in \citet[Section 3]{HoMaRa1998} and the general case of advected parameters taking values in manifolds has been developed in \cite{GBTr2010}. We shall follow here this more general setting.

Denoting by $M$ the manifold in which the parameters live we consider a left action of $G$ on $M$, simply denoted by concatenation as $(g,a) \mapsto ga$. The infinitesimal generator associated to $ \xi \in \mathfrak{g}  $ is the vector field on $M$ denoted $ \xi _M$. Given a reference parameter $a_\text{ref}\in M$, we use the notation $G_{a_{\rm ref}} \subset G$ and $\Orb(a_\text{ref}) \subset M$ for the isotropy subgroup and the orbit of $a_{\rm ref}$, respectively. Assuming $G_{a_{\rm ref}}$-invariance, the reduced (extended) velocity phase space is $(TG\times\R)/G_{a_\text{ref}}\cong\LieAlgebraFont{g}\times\Orb(a_\text{ref})\times\R\subset\LieAlgebraFont{g}\times M\times\R$, where the isomorphism is the map $[g,\dot{g},S]\mapsto(g^{-1}\dot{g},g^{-1}a_\text{ref},S)$. In what follows, unless necessary, all actions will be typed with concatenations for the sake of simplicity.

\begin{definition}[\textbf{Reduced map}]\label{def:left-invariant-map}
	Let $G$ be a Lie group acting on the left on a manifold $M$ and let $a_{\rm ref} \in M$ a given element. A map $\Phi_{a_{\rm ref}}:TG\times\R\to\R$ is \emph{left $G_{a_{\rm ref}}$-invariant} if and only if 
	\[
		\Phi_{a_{\rm ref}}(gh,g\dot{h},S)=\Phi_{a_{\rm ref}}(h,\dot{h},S),
	\]
	for all $g \in G_{a_{\rm ref}}$, $(h,\dot{h}) \in  TG$, and $S\in\R$. Left $G_{a_{\rm ref}}$-invariance permits us to define a \emph{reduced map} $\varphi:\LieAlgebraFont{g}\times \operatorname{Orb}(a_{\rm ref}) \times\R\to\R$ by setting
	\[
		\varphi(g^{-1}\dot{g},g^{-1}a_{\rm ref},S)=\Phi_{a_{\rm ref}}(g,\dot{g},S),
	\]
	for all $(g,\dot{g})\in TG$ and $S\in\R$. In the case where $\Phi_{a_{\rm ref}}$ takes values in $T^*G$ instead of $\R$ and is fiber-preserving\footnote{This will be the case of external and friction forces.}, we say that $\Phi_{a_{\rm ref}}$ is \emph{left $G_{a_{\rm ref}}$-equivariant} if and only if
	\[
		\Phi_{a_{\rm ref}}(gh,g\dot{h},S)=g\,\Phi_{a_{\rm ref}}(h,\dot{h},S),
	\]
	for all $g \in G_{a_{\rm ref}}$, $(h,\dot{h})\in TG$ and $S\in\R$. Here $g$ acts on $\Phi(h,\dot{h},a,S)\in T_h^*G$ as the cotangent lift of left translation $L_g$. Left $G_{a_{\rm ref}}$-equivariance permits us to define the \emph{reduced map} $\varphi:\LieAlgebraFont{g}\times \operatorname{Orb}(a_{\rm ref}) \times\R\to\LieAlgebraFont{g}^*$ by setting
	\[
		\varphi(g^{-1}\dot{g},g^{-1}a_{\rm ref},S)=g^{-1}\Phi_{a_{\rm ref}}(g,\dot{g},S),
	\]
	for all $(g,\dot{g})\in TG$ and $S\in\R$.
\end{definition}

We can now state and prove our extended Euler-Poincar\'e reduction theorem for simple thermodynamical systems.

\begin{theorem}[\textbf{Euler-Poincar\'e reduction for simple thermodynamical systems}]\label{thm:TEP-reduction}
	Let $G$ be a Lie group and $\LieAlgebraFont{g}$ its Lie algebra, and let $M$ be a manifold on which $G$ acts on the left. For a fixed parameter $a_\text{ref}\in M$, let:
	\begin{itemize}
		\item $\Lag_{a_\text{ref}}:TG\times\R\to\R$ be a $G_{a_\text{ref}}$-invariant Lagrangian,
		\item $F^\text{ext}_{a_\text{ref}}$, $F^\text{fr}_{a_\text{ref}}:TG\times\R\to T^*G$ be $G_{a_\text{ref}}$-equivariant exterior and friction forces\footnote{As usual, forces are assumed to be fiber-preserving.},
		\item $P^\text{ext}_{H,\,a_\text{ref}}:TG\times\R\to\R$ be a $G_{a_\text{ref}}$-invariant external heat power.
	\end{itemize}
	Denote the corresponding reduced maps by $\lag:\LieAlgebraFont{g}\times\Orb(a_\text{ref})\times\R\to\R$, $f^\text{ext}$, $f^\text{fr}:\LieAlgebraFont{g}\times\Orb(a_\text{ref})\times\R\to\LieAlgebraFont{g}^*$ and $p_H^\text{ext}:\LieAlgebraFont{g}\times\Orb(a_\text{ref})\times\R\to\R$. Then the following assertions are equivalent:
	\begin{enumerate}[label=(\arabic*), ref=\thetheorem.(\arabic*)]
		\item\label{eq:TVP} The curve $(g(t), S(t)) \in G \times \mathbb{R}$ is critical for the variational formulation of nonequilibrium thermodynamics recalled in Section \ref{sec:TVP}, i.e.,
	\[
		\delta\int_0^T\Lag_{a_\text{ref}}(g,\dot{g},S)\,\mathrm{d}t+\int_0^T\big\langle F^\text{ext}_{a_\text{ref}}(g,\dot{g},S),\delta g\big\rangle\,\mathrm{d}t=0
	\]
subject to the variational and phenomenological constraints
\begin{align*} 
	\frac{\partial\Lag_{a_\text{ref}}}{\partial S}(g,\dot{g},S)\delta S&=\left\langle F^\text{fr}_{a_\text{ref}}(g,\dot{g},S),\delta g\right\rangle,\\
	\frac{\partial\Lag_{a_\text{ref}}}{\partial S}(g,\dot{g},S)\dot{S}&=\left\langle F^\text{fr}_{a_\text{ref}}(g,\dot{g},S),\dot{g}\right\rangle-P^\text{ext}_{H,\,{a_\text{ref}}}(g,\dot{g},S),
\end{align*}
where $ \delta g$ vanishes at $t=0, T$.
\item The curve $(g(t),S(t)) \in G \times \mathbb{R}$ satisfies the equations of motion for the simple thermodynamical system, i.e., equations \eqref{eq:mechanical-equation} and \eqref{eq:thermodynamical-equation}.
		
		\item\label{eq:RTVP} The curve $( \xi (t), a(t), S(t))\in \mathfrak{g}  \times \operatorname{Orb}(a_{\rm ref}) \times \mathbb{R}  $, defined by $ \xi (t)= g(t) ^{-1} \dot g(t)$, $a(t)= g(t)^{-1} a_{\rm ref}$, is critical for the reduced variational formulation
\[
\delta\int_0^T\lag(\xi,a,S)\,\mathrm{d}t+\int_0^T\left\langle f^\text{ext}(\xi,a,S),\eta\right\rangle\mathrm{d}t=0
\]
subject to the variational and phenomenological  constraints
\begin{align*} 
			\frac{\partial\lag}{\partial S}(\xi,a,S)\delta S&=\left\langle f^\text{fr}(\xi,a,S),\eta\right\rangle,\\[2mm]
			\frac{\partial\lag}{\partial S}(\xi,a,S)\dot{S}&=\left\langle f^\text{fr}(\xi,a,S\big),\xi\right\rangle-p^\text{ext}_H(\xi,a,S),
\end{align*}
and to the Euler-Poincar\'e constraints
		\[
			\delta\xi=\dot{\eta}+[\xi,\eta],\quad\delta a=-\eta_M(a),
		\]
		where $\eta$ is any curve in $\LieAlgebraFont{g}$ vanishing at $t=0,T$.		
		\item\label{eq:TEP-equations} The curve $( \xi (t), a(t), S(t))\in \mathfrak{g}  \times \operatorname{Orb}(a_{\rm ref}) \times \mathbb{R}  $, defined by $ \xi (t)= g(t) ^{-1} \dot g(t)$, $a(t)= g(t)^{-1} a_{\rm ref}$ satisfies the equations
		\begin{empheq}[left=\empheqlbrace]{align}
			& \frac{\mathrm{d}}{\mathrm{d}t}\frac{\partial\lag}{\partial\xi}(\xi,a,S)=\ad^*_\xi\frac{\partial\lag}{\partial\xi}(\xi,a,S)-\mathbf{J}\left(\frac{\partial\lag}{\partial a}(\xi,a,S)\right)+f^\text{ext}(\xi,a,S)+f^\text{fr}(\xi,a,S),\label{eq:TEP-equation}\\[2mm]
			& \frac{\partial\lag}{\partial S}(\xi,a,S)\dot{S}=\left\langle f^\text{fr}(\xi,a,S),\xi\right\rangle-p_H^\text{ext}(\xi,a,S),\\[2mm]
			& \dot{a}+\xi_M(a)=0\label{eq:advection-equation},
		\end{empheq}
		where $\mathbf{J}:T^*M\to\LieAlgebraFont{g}^*$ is the momentum map associated to the cotangent lift of the action of $G$ on $T^*M$; it is defined by $\big\langle\mathbf{J}(\alpha_x),\xi\big\rangle=\big\langle\alpha_x,\xi_M(x)\big\rangle$ for any $x\in M$, $\alpha_x\in T_x^*M$ and $\xi\in\LieAlgebraFont{g}$.
		%In these equations, \eqref{eq:reconstruction-equation} is called the \emph{reconstruction equation} and \eqref{eq:advection-equation} is called the \emph{advection equation}. Equation \eqref{eq:TEP-equation}, without any forces, is the well-known \emph{Euler-Poincar\'e equation}.
	\end{enumerate}
\end{theorem}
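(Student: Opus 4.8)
The plan is to establish the chain of equivalences by proving $(1)\Leftrightarrow(2)$, then $(1)\Leftrightarrow(3)$, and finally $(3)\Leftrightarrow(4)$. The equivalence $(1)\Leftrightarrow(2)$ requires no reduction whatsoever: it is exactly the content of the variational formulation recalled in Section~\ref{sec:TVP}, applied verbatim to the Lagrangian $\Lag_{a_\text{ref}}$, the forces $F^\text{ext}_{a_\text{ref}}$, $F^\text{fr}_{a_\text{ref}}$ and the heat power $P^\text{ext}_{H,\,a_\text{ref}}$ on the configuration manifold $Q=G$. Taking free variations $\delta g$ vanishing at the endpoints in the variational condition, integrating by parts, and invoking the variational and phenomenological constraints produces precisely the thermodynamical Euler--Lagrange equations \eqref{eq:mechanical-equation}--\eqref{eq:thermodynamical-equation} written on $G$.

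The core of the theorem is the reduction $(1)\Leftrightarrow(3)$. First I would set up the bijective correspondence between variations: writing $\eta(t)=g(t)^{-1}\delta g(t)$, the condition $\delta g(0)=\delta g(T)=0$ is equivalent to $\eta(0)=\eta(T)=0$, and every curve $\eta$ vanishing at the endpoints arises in this way. The two structural identities
\[
\delta\xi=\dot{\eta}+[\xi,\eta],\qquad \delta a=-\eta_M(a),
\]
with $\xi=g^{-1}\dot g$ and $a=g^{-1}a_\text{ref}$, follow from the standard left-trivialized computation of $\partial_s(g^{-1}\partial_t g)-\partial_t(g^{-1}\partial_s g)$ and from differentiating the orbit relation $a=g^{-1}a_\text{ref}$ in the variation parameter; differentiating the same relation in time yields the advection equation $\dot a+\xi_M(a)=0$. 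Using $G_{a_\text{ref}}$-invariance of $\Lag_{a_\text{ref}}$ one has $\Lag_{a_\text{ref}}(g,\dot g,S)=\lag(\xi,a,S)$, so the action integrals coincide; in particular, because $G$ does not act on $S$, the entropy derivatives agree, $\frac{\partial\Lag_{a_\text{ref}}}{\partial S}=\frac{\partial\lag}{\partial S}$. Using $G_{a_\text{ref}}$-equivariance of the forces together with the reduced maps of Definition~\ref{def:left-invariant-map}, the pairings reduce as $\langle F^\bullet_{a_\text{ref}}(g,\dot g,S),\delta g\rangle=\langle f^\bullet(\xi,a,S),\eta\rangle$ for $\bullet\in\{\text{ext},\text{fr}\}$, and $P^\text{ext}_{H,\,a_\text{ref}}(g,\dot g,S)=p^\text{ext}_H(\xi,a,S)$. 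Consequently the variational condition, the variational constraint and the phenomenological constraint of $(1)$ are term-by-term equivalent to their reduced counterparts in $(3)$, and the correspondence of variations closes the equivalence.

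It then remains to prove $(3)\Leftrightarrow(4)$ by carrying out the reduced variational calculation explicitly. Expanding the variation of the reduced action gives
\[
\delta\int_0^T\lag(\xi,a,S)\,\mathrm{d}t=\int_0^T\left(\Big\langle\frac{\partial\lag}{\partial\xi},\delta\xi\Big\rangle+\Big\langle\frac{\partial\lag}{\partial a},\delta a\Big\rangle+\frac{\partial\lag}{\partial S}\delta S\right)\mathrm{d}t.
\]
Substituting the Euler--Poincar\'e constraints $\delta\xi=\dot\eta+[\xi,\eta]$ and $\delta a=-\eta_M(a)$, integrating the term $\langle\partial\lag/\partial\xi,\dot\eta\rangle$ by parts (the boundary terms vanish since $\eta(0)=\eta(T)=0$), using $\langle\partial\lag/\partial\xi,[\xi,\eta]\rangle=\langle\ad^*_\xi(\partial\lag/\partial\xi),\eta\rangle$, rewriting $\langle\partial\lag/\partial a,\eta_M(a)\rangle=\langle\mathbf{J}(\partial\lag/\partial a),\eta\rangle$ by the defining property of the momentum map $\mathbf{J}$, and replacing $\frac{\partial\lag}{\partial S}\delta S$ by $\langle f^\text{fr},\eta\rangle$ via the variational constraint, the variational condition collapses to
\[
\int_0^T\Big\langle-\frac{\mathrm{d}}{\mathrm{d}t}\frac{\partial\lag}{\partial\xi}+\ad^*_\xi\frac{\partial\lag}{\partial\xi}-\mathbf{J}\Big(\frac{\partial\lag}{\partial a}\Big)+f^\text{ext}+f^\text{fr},\eta\Big\rangle\,\mathrm{d}t=0.
\]
Since $\eta$ is arbitrary subject only to the endpoint conditions, the fundamental lemma of the calculus of variations yields \eqref{eq:TEP-equation}; the phenomenological constraint is exactly the thermodynamical equation, and the advection equation was obtained above from the definition of $a$. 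Reading the computation backwards gives the converse implication.

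Finally, I expect the main obstacle to be bookkeeping rather than a single deep step: one must check carefully that invariance transports the entropy derivative unchanged (so that both constraints reduce consistently), and that the equivariance convention of Definition~\ref{def:left-invariant-map} is precisely the one making $\langle F^\bullet_{a_\text{ref}},\delta g\rangle=\langle f^\bullet,\eta\rangle$ hold with no stray adjoint factors. The only genuinely structural inputs are the variation formula $\delta\xi=\dot\eta+[\xi,\eta]$ and the orbit variation $\delta a=-\eta_M(a)$, both classical in Euler--Poincar\'e theory; the thermodynamic layer adds no new difficulty, since $G$ acts trivially on $S$ and the entropy equation simply rides along as the phenomenological constraint.
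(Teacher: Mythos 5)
Your proposal is correct and follows essentially the same route as the paper's proof: the same correspondence of variations $\eta=g^{-1}\delta g$ with the Euler--Poincar\'e constraints $\delta\xi=\dot\eta+[\xi,\eta]$, $\delta a=-\eta_M(a)$, the same term-by-term reduction of the action, constraints and force pairings via invariance/equivariance, and the same explicit variational computation yielding \eqref{eq:TEP-equation}, with the advection equation obtained kinematically from $a=g^{-1}a_{\rm ref}$. The only differences are cosmetic: you reorder the chain of equivalences and re-derive $(1)\Leftrightarrow(2)$ where the paper cites \cite{GBYo2017a}.
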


\begin{proof} The equivalence between (1) and (2) follows from a direct computation, see \citet[Section 2]{GBYo2017a}. We now show that (3) and (4) are equivalent. Taking variations from the left hand side of the variational condition in (3), we obtain using all the available constraints:
	\begin{align*}
			&\int_0^T\left\langle\frac{\partial\lag}{\partial\xi},\delta\xi\right\rangle\,\mathrm{d}t
			+\int_0^T\left\langle\frac{\partial\lag}{\partial a},\delta a\right\rangle\,\mathrm{d}t
			+\int_0^T\frac{\partial\lag}{\partial S}\delta S\,\mathrm{d}t
			+\int_0^T\langle f^\text{ext},\eta\rangle\,\mathrm{d}t\\
			&\quad=\int_0^T\left\langle\frac{\partial\lag}{\partial\xi},\dot{\eta}+[\xi,\eta]\right\rangle\,\mathrm{d}t
			-\int_0^T\left\langle\frac{\partial\lag}{\partial a},\eta_M(a)\right\rangle\,\mathrm{d}t
			+\int_0^T\langle f^\text{ext}+f^\text{fr},\eta\rangle\,\mathrm{d}t\\
			&\quad=\int_0^T\left\langle-\frac{\mathrm{d}}{\mathrm{d}t}\frac{\partial\lag}{\partial\xi}+\ad^*_\xi\frac{\partial\lag}{\partial\xi}-\mathbf{J}\left(\frac{\partial\lag}{\partial a}\right)+f^\text{ext}+f^\text{fr},\eta\right\rangle\,\mathrm{d}t+\left[\left\langle\frac{\partial\lag}{\partial\xi},\eta\right\rangle\right]_0^T,
	\end{align*}
	which yields equations \eqref{eq:TEP-equation}, the last term in the right hand side being zero. The advection equation comes from a slightly more technical computation. Denoting by $\sigma:G\times M\to M$ the left action of $G$ on $M$ and by $ \mathbf{d} \sigma_{(g,a)}:T_gG \times T_a \operatorname{Orb}(a_{\rm ref}) \rightarrow \mathbb{R}  $ its derivative at $(g,a)$, we compute that
	\begin{equation*}
		\dot{a}=\dif\sigma_{(g^{-1},a_\text{ref})}(-g^{-1}\dot{g}g^{-1},0).
	\end{equation*}
	Introducing for $k\in G$ the map $E_k:G\times M\to G\times M$ defined by $E_k(h,n)=(hk^{-1},kn)$ we obtain
	\[
		\dot{a}=E_{g}^*\dif\sigma_{(e,a)}(-\xi,0).
	\]
	Since the pullback commutes with the differential and $E_k^*\sigma=\sigma$ for any $k\in G$, we get the advection equation $\dot{a}=\mathbf{d} \sigma _{(e,a)}(- \xi , 0)=-\xi_M(a)$.

	{The equivalence between (1) and (3) follows by observing that the constraints and action functional in (1) and (3) are equal from the $G_{a_\text{ref}}$-invariance of the Lagrangian and external heat power, as well as the $G_{a_\text{ref}}$-equivariance of the forces. For instance, we have
	\[
		\big\langle F_{a_\text{ref}}^\text{ext}(g,\dot{g},S),\delta g\big\rangle
		=\big\langle g\cdot f^\text{ext}(\xi,a,S),\delta g\big\rangle
		=\big\langle f^\text{ext}(\xi,a,S),\eta\big\rangle.
	\]
	The equivalence between the variations used in (1) and (3) follows exactly as in the case without thermodynamics.}
	\end{proof}

\begin{remark}
	The theorem we just proved could be called the \emph{left-left} Euler-Poincar\'e reduction for simple thermodynamical systems because we used the left action of $G$ on itself, as well as a left action of $G$ on $M$, but other combinations are possible, and are useful for applications. For the example presented in this article (see Section \ref{sec:ball}), we will stick to this left-left version of the theorem. In absence of thermodynamics and external forces, the system \eqref{eq:TEP-equation}--\eqref{eq:advection-equation} reduces to the Euler-Poincar\'e equations
\begin{empheq}[left=\empheqlbrace]{align}
			& \frac{\mathrm{d}}{\mathrm{d}t}\frac{\partial\lag}{\partial\xi}(\xi,a,S)=\ad^*_\xi\frac{\partial\lag}{\partial\xi}(\xi,a,S)-\mathbf{J}\left(\frac{\partial\lag}{\partial a}(\xi,a,S)\right),\nonumber\\[2mm]
			& \dot{a}+\xi_M(a)=0\nonumber,
		\end{empheq}	
with advected parameter in the manifold $M$, see \cite{HoMaRa1998}, \cite{GBTr2010}.
\end{remark}

\begin{remark}[Coadjoint orbits] {We note that in general, the solution of \eqref{eq:TEP-equation}--\eqref{eq:advection-equation} do not preserve the coadjoint orbits in the dual of the semidirect product Lie algebra $(\mathfrak{g}\,\circledS\, V)^*$, which are well-known to be preserved in absence of friction forces and external effects, when $M=V^*$ is the dual of a vector space on which $G$ acts by a representation, see \cite{HoMaRa1998}. It is however possible to choose the friction force in such a way that the coadjoint orbits are preserved.}

{To simplify our discussion, let us assume that there are no advected parameters, so that we have $G$-invariance, and assume that external effects are absent, $f^{\rm ext}=0$, $p^{\rm ext}_H=0$. We assume that the Lagrangian is hyperregular and consider the associated Hamiltonian $h:\mathfrak{g}^*\times\mathbb{R}\rightarrow\mathbb{R}$ defined via the Legendre transform as $h(\mu,S)=\langle \mu,\xi\rangle-\ell(\xi,S)$, where $\xi$ is such that $\frac{\partial\ell}{\partial\xi}(\xi,S)=\mu$. In this case, the thermodynamical system \eqref{eq:TEP-equation}--\eqref{eq:advection-equation} reduces to
\begin{empheq}[left=\empheqlbrace]{align}
			& \displaystyle\frac{\mathrm{d}}{\mathrm{d}t}\mu=\ad^*_{\frac{\partial h}{\partial\mu}(\mu,S)}\mu +f^\text{fr}(\mu,S),\label{eq:TEP-equation_EP}\\[2mm]
			& \frac{\partial h}{\partial S}(\mu,S)\dot{S}=-\left\langle f^\text{fr}(\mu,S),\frac{\partial h}{\partial\mu}(\mu,S)\right\rangle,
		\end{empheq}
where we have expressed the friction force in terms of the momentum $\mu$. Recall that the tangent space at $\mu$ to a coadjoint $\mathcal{O}_{\mu_0}=\{\Ad_g^*\mu_0\mid g\in G\}\subset \mathfrak{g}^*$ is $T_\mu \mathcal{O}_{\mu_0}=\{\ad^*_\xi\mu\mid \xi\in \mathfrak{g}\}$, see, e.g., \cite{MaRa1999}. From this expression of the tangent space and from equation \eqref{eq:TEP-equation_EP} it is clear that the coadjoint orbits are preserved if and only if the friction force is of the form $f^\text{fr}(\mu,S)=\ad^*_{\zeta(\mu,S)}\mu$, for a function $\zeta:\mathfrak{g}^*\times\mathbb{R}\rightarrow \mathfrak{g}$. In this case, we have $(\mu(t),S(t))\in \mathcal{O}_{\mu_0}\times \mathbb{R}$ for all $t\geq 0$, where $\mu_0=\mu(0)$ is the initial condition for the momentum. From the second law and equation \eqref{eq:TEP-equation_EP}, the friction force must be dissipative. Since $\left\langle f^\text{fr}(\mu,S),\frac{\partial h}{\partial\mu}(\mu,S)\right\rangle=- \left\langle \ad^*_{\frac{\partial h}{\partial\mu}(\mu,S)}\mu,\zeta(\mu,S)\right\rangle $, the choice $\zeta(\mu,S):=\lambda(\mu,S) \left[\ad^*_{\frac{\partial h}{\partial\mu}(\mu,S)}\mu\right]^\sharp$, for a positive function  $\lambda:\mathfrak{g}^*\times\mathbb{R}\rightarrow\mathbb{R}$, yields a dissipative force, where $\sharp:\mathfrak{g}^*\rightarrow\mathfrak{g}$ is the sharp operator associated to an inner product $\gamma:\mathfrak{g}\times\mathfrak{g}\rightarrow\mathbb{R}$ on $\mathfrak{g}$, i.e., we have
\begin{equation}\label{fr_db}
f^\text{fr}(\mu,S)=\lambda(\mu,S)\ad^*_{\big(\ad^*_{\frac{\partial h}{\partial\mu}(\mu,S)}\mu\big)^\sharp}\mu.
\end{equation}
Note that the dependence of $\lambda$ on the entropy can be converted to a dependence on the temperature of the system.
In this case, the entropy equation reduces to
\[
\displaystyle T\dot S= \lambda(\mu,S)\Big|\ad^*_{\frac{\partial h}{\partial\mu}}(\mu,S)\Big|^2,
\]
where the norm $|\cdot |$ is associated to $\gamma$. In absence of the entropy variable, \eqref{fr_db} recovers the expression of the dissipative external force obtained by double bracket dissipation in \cite{BlKrMaRa1994}. In our context, $f^\text{fr}(\mu,S)$, as given in \eqref{fr_db}, is an internal force describing an irreversible process occurring in the system, and leading to an increase of the entropy.}
\end{remark}

\subsection{Energy balance}

We are now interested in writing the reduced version of the energy balance \eqref{eq:energy-balance}. We will work with the notations introduced in Theorem \ref{thm:TEP-reduction}. From the definition \eqref{def:energy} of the energy, we define its reduced version as the map $e:\LieAlgebraFont{g}\times\Orb(a_\text{ref})\times\R\to\R$ given by
\begin{equation}\label{def:reduced-energy}
	e(\xi,a,S)=\left\langle\frac{\partial\lag}{\partial\xi}(\xi,a,S),\xi\right\rangle-\lag(\xi,a,S),
\end{equation}
for all $\xi\in\LieAlgebraFont{g}$, $a\in M$ and $S\in\R$.

\begin{corollary}[\textbf{Energy balance}]\label{cor:energy-balance}
	Suppose that $(g,\xi,a,S)$ is a curve that satisfies the equations \ref{eq:TEP-equations}. Let $p^\text{ext}_W:\mathfrak{g}\times\Orb(a_\text{ref})\times\R\to\R$ be the reduced power of the external forces, defined by $p^\text{ext}_W(\xi,a,S)=\big\langle f^\text{ext}(\xi,a,S),\xi\big\rangle$. Then along the curve $(g,\xi,a,S)$ we have
	\[
		\frac{\mathrm{d}e}{\mathrm{d}t}=p^\text{ext}_W(\xi,a,S)+p^\text{ext}_H(\xi,a,S).
	\]
\end{corollary}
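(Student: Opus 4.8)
The plan is to differentiate the reduced energy $e$ along a solution of the equations \ref{eq:TEP-equations} and then substitute the three evolution equations one at a time. First I would set $\mu=\frac{\partial\lag}{\partial\xi}(\xi,a,S)$, so that $e=\langle\mu,\xi\rangle-\lag$, and apply the chain rule:
\[
	\frac{\mathrm{d}e}{\mathrm{d}t}=\langle\dot\mu,\xi\rangle+\langle\mu,\dot\xi\rangle-\left\langle\frac{\partial\lag}{\partial\xi},\dot\xi\right\rangle-\left\langle\frac{\partial\lag}{\partial a},\dot a\right\rangle-\frac{\partial\lag}{\partial S}\dot S.
\]
Since $\frac{\partial\lag}{\partial\xi}=\mu$, the two $\dot\xi$-terms cancel, leaving the compact expression $\frac{\mathrm{d}e}{\mathrm{d}t}=\langle\dot\mu,\xi\rangle-\big\langle\frac{\partial\lag}{\partial a},\dot a\big\rangle-\frac{\partial\lag}{\partial S}\dot S$.

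Next I would substitute the reduced Euler--Poincar\'e equation \eqref{eq:TEP-equation} for $\dot\mu$ and pair it with $\xi$. The two algebraic facts I rely on here are the antisymmetry identity $\langle\ad^*_\xi\mu,\xi\rangle=\langle\mu,[\xi,\xi]\rangle=0$, and the defining property of the momentum map, $\big\langle\mathbf{J}\big(\frac{\partial\lag}{\partial a}\big),\xi\big\rangle=\big\langle\frac{\partial\lag}{\partial a},\xi_M(a)\big\rangle$. Together these give
\[
	\langle\dot\mu,\xi\rangle=-\left\langle\frac{\partial\lag}{\partial a},\xi_M(a)\right\rangle+\langle f^\text{ext},\xi\rangle+\langle f^\text{fr},\xi\rangle.
\]

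Then I would invoke the advection equation \eqref{eq:advection-equation}, $\dot a=-\xi_M(a)$, so that $-\big\langle\frac{\partial\lag}{\partial a},\dot a\big\rangle=\big\langle\frac{\partial\lag}{\partial a},\xi_M(a)\big\rangle$ exactly cancels the advected-parameter contribution above. Finally, the entropy equation supplies $\frac{\partial\lag}{\partial S}\dot S=\langle f^\text{fr},\xi\rangle-p^\text{ext}_H$; subtracting it cancels the friction pairing $\langle f^\text{fr},\xi\rangle$ and reinstates $+p^\text{ext}_H$. What survives is precisely $\langle f^\text{ext},\xi\rangle+p^\text{ext}_H=p^\text{ext}_W+p^\text{ext}_H$, which is the asserted balance.

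This is fundamentally a bookkeeping computation, so I do not expect a single hard obstacle; the care lies in the three cancellations occurring in sequence — the coadjoint term vanishing by antisymmetry of the bracket, the $\frac{\partial\lag}{\partial a}$-term cancelling through the momentum-map identity together with the advection equation, and the friction power cancelling against the internal entropy production. The physical content worth emphasizing is exactly that the friction force does no \emph{net} contribution to the energy budget: its mechanical power is balanced term-for-term by the entropy-production term, so that only the genuinely external inputs $p^\text{ext}_W$ and $p^\text{ext}_H$ remain, in agreement with the unreduced first law \eqref{eq:energy-balance}.
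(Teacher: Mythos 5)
Your proof is correct and follows essentially the same route as the paper's: differentiate $e$, cancel the $\dot\xi$-terms via the Legendre pairing, substitute \eqref{eq:TEP-equation}, the advection equation, and the entropy equation, and use $\langle\ad^*_\xi\mu,\xi\rangle=0$ together with the momentum-map identity. The paper merely compresses the momentum-map/advection cancellation into its first displayed line; the content is identical.
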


\begin{proof}
	Using the equations \ref{eq:TEP-equations} we have:
	\begin{align*}
		\frac{\mathrm{d}e}{\mathrm{d}t}&=
		\left\langle\frac{\mathrm{d}}{\mathrm{d}t}\frac{\partial\lag}{\partial\xi}(\xi,a,S)+\mathbf{J}\left(\frac{\partial\lag}{\partial a}(\xi,a,S)\right),\xi\right\rangle-\frac{\partial\lag}{\partial S}(\xi,a,S)\dot{S}\\
		&=\left\langle\ad_\xi^*\frac{\partial\lag}{\partial\xi}(\xi,a,S),\xi\right\rangle+\big\langle f^\text{ext}(\xi,a,S),\xi\big\rangle+p^\text{ext}_H(\xi,a,S)\\
		&=p^\text{ext}_W(\xi,a,S)+p^\text{ext}_H(\xi,a,S).
	\end{align*}	
\end{proof}

\subsection{Kelvin-Noether theorem}\label{sec:Kelvin-Noether}

The Kelvin-Noether theorem is a version of Noether's theorem that holds for solutions of the Euler-Poincar\'e equations, see \citet[Theorem 4.1]{HoMaRa1998}. It is especially useful to understand the Kelvin circulation theorem in fluid dynamics. We shall give here an extension of this theorem which includes thermodynamics, in the finite dimensional case.

Let $\mathcal{C}$ be a manifold on which $G$ acts on the left and let $\mathcal{K}:\mathcal{C}\times \operatorname{Orb}(a_{\rm ref}) \to\LieAlgebraFont{g}^{**}\cong\LieAlgebraFont{g}$ be a $G$-equivariant map, where the action on $\LieAlgebraFont{g}^{**}\cong\LieAlgebraFont{g}$ is the dual of the coadjoint action of $G$ on $\LieAlgebraFont{g}^*$ (we do identify $\LieAlgebraFont{g}^{**}$ with $\LieAlgebraFont{g}$ because our examples will be finite-dimensional). The \emph{Kelvin-Noether quantity} associated to $ \mathcal{C} $ and $ \mathcal{K} $ is the map $I:\mathcal{C}\times\LieAlgebraFont{g}\times  \operatorname{Orb}(a_{\rm ref}) \times\R\to\R$ defined by
\[
	I(c,\xi,a,S)=\left\langle\mathcal{K}(c,a),\frac{\partial\lag}{\partial\xi}(\xi,a,S)\right\rangle,
\]
for all $c\in\mathcal{C}$, $\xi\in\LieAlgebraFont{g}$, $a\in  \operatorname{Orb}(a_{\rm ref}) $ and $S\in\R$.

\begin{corollary}[\textbf{Kelvin-Noether theorem}]\label{cor:Kelvin-Noether}
	Let $c_\text{ref}\in\mathcal{C}$ fixed and let $(g,\xi,a,S)$ be a curve satisfying the reduced equations \eqref{eq:TEP-equation}--\eqref{eq:advection-equation}. Let $c=g^{-1}c_\text{ref}$. Then along the curve $(g,\xi,a,S)$ we have:
	\[
		\frac{\mathrm{d}I}{\mathrm{d}t}=\left\langle\mathcal{K}(c,a),-\mathbf{J}\left(\frac{\partial\lag}{\partial a}(\xi,a,S)\right)+\big(f^\text{ext}+f^\text{fr}\big)(\xi,a,S)\right\rangle.
	\]
\end{corollary}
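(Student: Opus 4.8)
The plan is to follow the classical argument behind the Kelvin--Noether theorem in \citet[Theorem 4.1]{HoMaRa1998} and adapt it to the extra terms produced by the friction force, the external force, and the advection. Since there is no integral here, this is a direct pointwise differentiation along the solution rather than a variational computation. The first step is to eliminate the explicit dependence of $\mathcal{K}$ on $(c,a)$ by exploiting its $G$-equivariance. Because $c=g^{-1}c_\text{ref}$ and $a=g^{-1}a_\text{ref}$, so that $g\cdot c=c_\text{ref}$ and $g\cdot a=a_\text{ref}$, equivariance gives $\mathcal{K}(c_\text{ref},a_\text{ref})=g\cdot\mathcal{K}(c,a)$, hence $\mathcal{K}(c,a)=g^{-1}\cdot\mathcal{K}(c_\text{ref},a_\text{ref})$. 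A short computation identifies the dual of the coadjoint action on $\mathfrak{g}^{**}\cong\mathfrak{g}$ as $g\cdot\kappa=\Ad_g\kappa$, so that, writing $\kappa_\text{ref}:=\mathcal{K}(c_\text{ref},a_\text{ref})$ (constant in time, since $c_\text{ref}$ and $a_\text{ref}$ are fixed) and $\mu:=\frac{\partial\lag}{\partial\xi}(\xi,a,S)$, the Kelvin--Noether quantity becomes
\[
	I(t)=\big\langle\Ad_{g^{-1}}\kappa_\text{ref},\mu\big\rangle=\big\langle\Ad_{g^{-1}}^*\mu,\kappa_\text{ref}\big\rangle.
\]
In other words, $I$ pairs the fixed vector $\kappa_\text{ref}$ with the spatial-frame momentum $\Ad_{g^{-1}}^*\mu$.

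The second step is to differentiate this expression. As $\kappa_\text{ref}$ is constant, only $\Ad_{g^{-1}}^*\mu$ is differentiated, and the key identity, valid for $\xi=g^{-1}\dot g$, is
\[
	\frac{\mathrm{d}}{\mathrm{d}t}\big(\Ad_{g^{-1}}^*\mu\big)=\Ad_{g^{-1}}^*\Big(\dot\mu-\ad_\xi^*\mu\Big),
\]
which I would obtain from the standard relation $\frac{\mathrm{d}}{\mathrm{d}t}\Ad_{g^{-1}}=-\ad_\xi\circ\Ad_{g^{-1}}$ by passing to duals. I then substitute the reduced equation of motion \eqref{eq:TEP-equation}, which reads $\dot\mu=\ad_\xi^*\mu-\mathbf{J}\big(\tfrac{\partial\lag}{\partial a}\big)+f^\text{ext}+f^\text{fr}$; the term $\ad_\xi^*\mu$ cancels exactly against the one in the identity above, leaving
\[
	\frac{\mathrm{d}}{\mathrm{d}t}\big(\Ad_{g^{-1}}^*\mu\big)=\Ad_{g^{-1}}^*\Big(-\mathbf{J}\big(\tfrac{\partial\lag}{\partial a}\big)+f^\text{ext}+f^\text{fr}\Big).
\]

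The final step is to transport this back to the point $(c,a)$. Pairing with the constant $\kappa_\text{ref}$ and using $\langle\Ad_{g^{-1}}^*\nu,\kappa_\text{ref}\rangle=\langle\nu,\Ad_{g^{-1}}\kappa_\text{ref}\rangle=\langle\mathcal{K}(c,a),\nu\rangle$ for every $\nu\in\mathfrak{g}^*$ recovers precisely the claimed expression for $\frac{\mathrm{d}I}{\mathrm{d}t}$. I expect the only genuine obstacle to be bookkeeping rather than conceptual: one must fix once and for all the inversion and sign conventions for the coadjoint action and its dual so that equivariance in the first step yields $\Ad_{g^{-1}}$ (and not $\Ad_g$), and then verify the derivative identity for $\Ad_{g^{-1}}^*$ consistently with those conventions. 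Once these are aligned, the cancellation of $\ad_\xi^*\mu$ is automatic, and the thermodynamics enters the statement only through the presence of $S$ in the arguments and through $f^\text{fr}$ in \eqref{eq:TEP-equation} --- the entropy equation itself plays no role --- so that the advection and forcing terms appear exactly as in the isentropic Euler--Poincaré case.
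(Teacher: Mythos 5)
Your proposal is correct and follows essentially the same route as the paper: both use the $G$-equivariance of $\mathcal{K}$ to rewrite $I=\big\langle\mathcal{K}(c_\text{ref},a_\text{ref}),\Ad^*_{g^{-1}}\frac{\partial\lag}{\partial\xi}\big\rangle$, apply the differentiation formula $\frac{\mathrm{d}}{\mathrm{d}t}\Ad^*_{g^{-1}}\mu=\Ad^*_{g^{-1}}\big(\dot\mu-\ad^*_\xi\mu\big)$ so that the $\ad^*_\xi$ term cancels against \eqref{eq:TEP-equation}, and then transport back via equivariance. Your sign and convention checks (the dual of the coadjoint action being $\Ad_g$, and $\mathcal{K}(c,a)=\Ad_{g^{-1}}\kappa_\text{ref}$) are consistent with the paper's computation.
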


\begin{proof}
	Write $a=g^{-1}a_\text{ref}$, with $a_\text{ref}=g(0)a(0)$. First, using the $G$-equivariance property of $\mathcal{K}$, we have that
	\[
		\left\langle\mathcal{K}(c,a),\frac{\partial\lag}{\partial\xi}(\xi,a,S)\right\rangle
		=\left\langle\mathcal{K}(c_\text{ref},a_\text{ref}), \operatorname{Ad}^*_{ g ^{-1} } \frac{\partial\lag}{\partial\xi}(\xi,a,S)\right\rangle.
	\]
	Then, using the formula for the differentiation of the coadjoint action as well as equations \eqref{eq:TEP-equation}, we obtain:
	\begin{align*}
		\frac{\mathrm{d}I}{\mathrm{d}t}
		&=\left\langle\mathcal{K}(c_\text{ref},a_\text{ref}),  \operatorname{Ad}^*_{ g ^{-1} }\left[-\ad^*_{\xi}\frac{\partial\lag}{\partial\xi}(\xi,a,S)+\frac{\mathrm{d}}{\mathrm{d}t}\frac{\partial\lag}{\partial\xi}(\xi,a,S)\right]\right\rangle\\
		&=\left\langle\mathcal{K}(c_\text{ref},a_\text{ref}),   \operatorname{Ad}^*_{ g ^{-1} }\left[-\mathbf{J}\left(\frac{\partial\lag}{\partial a}(\xi,a,S)\right)+\big(f^\text{ext}+f^\text{fr}\big)(\xi,a,S)\right]\right\rangle\\
		&=\left\langle\mathcal{K}(c,a),-\mathbf{J}\left(\frac{\partial\lag}{\partial a}(\xi,a,S)\right)+\big(f^\text{ext}+f^\text{fr}\big)(\xi,a,S)\right\rangle.
	\end{align*}
\end{proof}

\section{Variational discretization of simple thermodynamical Euler-Poincar\'e systems}

In this section we first review from \cite{GBYo2017c} the variational discretization for the thermodynamics of simple systems. Then we develop the discrete version of the Euler-Poincar\'e reduction for thermodynamics carried out in Section \ref{sec:TEP-reduction}.
  
\subsection{Variational discretization of thermodynamics}\label{sec:variational-integrators}

Variational integrators are numerical schemes that arise from a discrete version of Hamilton's principle, or Lagrange-d'Alembert's principle in the case external forces act on the system. These geometric integrators are thoroughly reviewed in \cite{MaWe2001}, we simply recall the broad idea hereafter. Let $Q$ be a configuration manifold and $\Lag:TQ\to\R$ be a Lagrangian. Given a time step $h$, $[0,T]$ is discretized into the sequence $t_k=kh$, $k\in\{0,\dots,N\}$. A curve $q$ in $Q$ is discretized into a sequence $q_d=(q_k)_{0\leq k\leq N}$, and a variation $\delta q$ of $q$ is discretized into a sequence $\delta q_d=(\delta q_k)_{0\leq k\leq N}$, such that $\delta q_k\in T_{g_k}Q$, for any $k\in\{0,\dots,N\}$. The Lagrangian $\Lag$ is discretized into a \emph{discrete Lagrangian} $\Lag_d:Q\times Q\to\R$ such that we have
\[
	\Lag_d(q_k,q_{k+1})\approx\int_{t_k}^{t_{k+1}}\Lag(q(t),\dot{q}(t))\,\mathrm{d}t,
\]
where the curve $q(t)$ is the solution of the Euler-Lagrange equations with endpoints $q_k$ and $q_{k+1}$. Usually this approximation is related to some numerical quadrature rule of the integral above. Then the discrete analogue of Hamilton's principle for the discrete action defined by
\[
	\mathcal{S}_d(q_d)=\sum_{k=0}^{N-1}\Lag_d(q_k,q_{k+1})
\]
is $\delta\mathcal{S}_d(q_d)\cdot\delta q_d=0$ for all variations $\delta q_d$ of $q_d$ with vanishing endpoints. After taking variations and applying a discrete integration by parts formula (change of indices), we obtain the \emph{discrete Euler-Lagrange} equations:
\[
	D_2\Lag_d(q_{k-1},q_k)+D_1\Lag_d(q_k,q_{k+1})=0,\quad\forall k\in\{1,\dots,N-1\}.
\]
For more details see \citet[Section 1.3.1]{MaWe2001}. These equations define, under appropriate conditions, an algorithm which solves for $q_{k+1}$ knowing the two previous configuration variables $q_k$ and $q_{k-1}$. The fact that these integrators are symplectic will be reviewed later on.

We now review the variational discretization of thermodynamical simple systems as introduced in \citet[Section 3.1]{GBYo2017c}. The entropy curve $t\in[0,T]\mapsto S(t)\in\R$ is discretized into a sequence $S_d=(S_k)_{0\leq k\leq N}$, $k\in\{0,\dots,N\}$. The discrete Lagrangian is now a map $\Lag_d:Q\times Q\times\R\times\R\to\R$ such that:
\[
	\Lag_d(q_k,q_{k+1},S_k,S_{k+1})\approx\int_{t_k}^{t_{k+1}}\Lag(q(t),\dot{q}(t),S(t))\,\mathrm{d}t.
\]
As recalled in Section \ref{sec:TVP}, we have two kind of forces that act on the system: external forces $F^\text{ext}$ (that do not derive from a potential) and friction forces $F^\text{fr}$. The discrete counterparts of these forces are given by four maps $F^{\text{ext}+}_d$, $F^{\text{ext}-}_d$, $F^{\text{fr}+}_d$, $F^{\text{fr}-}_d:Q\times Q\times\R\times\R\to T^*Q$ such that the following approximation holds:
\begin{align*}
	\int_{t_k}^{t_{k+1}}\left\langle F^{\text{ext}}(q(t),\dot{q}(t),S(t)),\delta q(t)\right\rangle\mathrm{d}t
	\approx\big\langle & F_{d}^{\text{ext}-}(q_k,q_{k+1},S_k,S_{k+1}),\delta q_k\big\rangle\\
	&\quad+\big\langle F_{d}^{\text{ext}+}(q_k,q_{k+1},S_k,S_{k+1}),\delta q_{k+1}\big\rangle
\end{align*}
and similarly for $F^\text{fr}_d$. These discrete forces are required to be fiber-preserving in the sense that $\pi_{T^*Q}\circ F^{\text{ext}\pm}_d=\pi_Q^\pm$ and similarly for $F^\text{fr}_d$, with $\pi_{T^*Q}:T^*Q\to Q$ being the canonical projection and the maps $\pi_Q^\pm:Q\times Q\times\R\times\R\to Q$ being defined by $\pi_Q^-(q_0,q_1,S_0,S_1)=q_0$ and $\pi_Q^+(q_0,q_1,S_0,S_1)=q_1$. Concretely this means, for instance, that $F^{\text{ext}+}_d(q_0,q_1,S_0,S_1)\in T_{q_1}^*Q$. See also \citet[Section 3.2.1]{MaWe2001} for a description of forces in the discrete setting.

We now need to discretize the phenomenological constraint. As stated in \citet[Section 3]{GBYo2017c}, this is done with the help of a \emph{finite difference map} $\varphi:Q\times Q\times\R\times\R\to TQ\times T\R$, a notion which was introduced in \citet[Section 4]{McPe2006} for the development of variational integrators for systems with nonholonomic constraints. Essentially, such maps are directly responsible for the discretization of $q(t)$, $\dot{q}(t)$, $S(t)$ and $\dot{S}(t)$ in terms of $q_k$, $q_{k+1}$, $S_k$ and $S_{k+1}$, therefore their use is not limited to the discretization of constraints and they are also used for the discretization of the Lagrangian, as we will see later.

The phenomenological constraint can be seen as the zero-level set $\mathcal{C}$ of the map $P:T(Q\times\R)\to\R$ defined by:
\[
	P(q,\dot{q},S,\dot{S})=\frac{\partial\Lag}{\partial S}(q,\dot{q},S)\dot{S}-\big\langle F^\text{fr}(q,\dot{q},S),\dot{q}\big\rangle+P_H^\text{ext}(q,\dot{q},S),
\]
for any $(q,\dot{q})\in TQ$ and $(S,\dot{S})\in T\R$. Note that since $\frac{\partial P}{\partial\dot{S}}=-T\neq 0$, $P$ is a submersion and $\mathcal{C}$ is a codimension one submanifold of $T(Q\times\R)$. The discrete counterpart of $\mathcal{C}$ is $\mathcal{C}_d$, defined as
\begin{equation}\label{eq:discrete-phenomenological-constraint}
	\mathcal{C}_d=\varphi^{-1}(\mathcal{C})\subset Q\times Q\times\R\times\R.
\end{equation}
Therefore, $\mathcal{C}_d$ can be seen as the zero-level set of the map $P_d=P\circ\varphi:Q\times Q\times\R\times\R\to\R$. 

\begin{remark}
	Note that the way in which the entropy is discretized in the discrete Lagrangian $\Lag_d$ is not necessarily the one used for the discrete phenomenological constraint $P_d$.
\end{remark}

We can now state the discrete version of the variational formulation of Section \ref{sec:TVP}.
A discrete curve $(q_d,S_d)$ satisfies the \textit{discrete variational formulation for nonequilibrium thermodynamics} if first it satisfies the \emph{discrete variational condition}
\begin{align}
	\delta\sum_{k=0}^{N-1}\Lag_d(q_k&,q_{k+1},S_k,S_{k+1})\notag\\
	&+\sum_{k=0}^{N-1}\big\langle F^{\text{ext}-}_d(q_k,q_{k+1},S_k,S_{k+1}),\delta q_k\big\rangle
	+\big\langle F^{\text{ext}+}_d(q_k,q_{k+1},S_k,S_{k+1}),\delta q_{k+1}\big\rangle=0,
\end{align}
for all variations $\delta q_d$ and $\delta S_d$ satisfying the \emph{discrete variational constraint}
\begin{align}
	D_3\Lag_d(q_k,q_{k+1},S_k,S_{k+1})&\delta S_k+D_4\Lag_d(q_k,q_{k+1},S_k,S_{k+1})\delta S_{k+1}\notag\\
	&=\big\langle F^{\text{fr}-}_d(q_k,q_{k+1},S_k,S_{k+1}),\delta q_k\big\rangle
	+\big\langle F^{\text{fr}+}_d(q_k,q_{k+1},S_k,S_{k+1}),\delta q_{k+1}\big\rangle,
\end{align}
for all $k\in\{0,\dots,N-1\}$, where $ \delta q_d$ vanishes at the endpoints, and if it also satisfies the \emph{discrete phenomenological constraint} 
\begin{equation}
	P_d(q_k,q_{k+1},S_k,S_{k+1})=0,
\end{equation}
for all $k\in\{0,\dots,N-1\}$. Taking variations and applying a discrete integration by parts (change of indices) yield the \emph{discrete equations for the thermodynamic of simple closed systems}:
\begin{empheq}[left=\empheqlbrace,right=,]{align}
	&D_1\Lag_d(q_k,q_{k+1},S_k,S_{k+1})+D_2\Lag_d(q_{k-1},q_k,S_{k-1},S_k)\notag\\[2mm]
	&\quad\quad\quad+(F^{\text{ext}-}_d+F^{\text{fr}-}_d)(q_k,q_{k+1},S_k,S_{k+1})+(F^{\text{ext}+}_d+F^{\text{fr}+}_d)(q_{k-1},q_k,S_{k-1},S_k)=0,\label{eq:discrete-mechanical-equation}\\[2mm]
	&P_d(q_k,q_{k+1},S_k,S_{k+1})=0\label{eq:discrete-thermodynamical-equation},
\end{empheq}
for all $k\in\{0,\dots,N-1\}$, see \cite{GBYo2017c}.

\subsection{Discrete Euler-Poincar\'e reduction for simple thermodynamical systems}\label{sec:DTEP-reduction}

Starting from the discrete variational formalism reviewed in the previous section in the case where $Q$ is a finite-dimensional Lie group $G$ acting on itself via  left multiplication, we perform a discrete analogue of the Euler-Poincar\'e reduction for simple thermodynamical systems developed in Section \ref{sec:TEP-reduction}. The tangent space $TG$ is discretized into $G\times G$ as usual, and the analogue of the projection map $\bar{\pi}:TG\to TG/G\cong\LieAlgebraFont{g}$ is given by one of the two maps $\bar{\pi}^\pm:G\times G\to(G\times G)/G\cong G$ defined by $\bar{\pi}^+(g_0,g_1)=g_0^{-1}g_1$ and $\bar{\pi}^-(g_0,g_1)=g_1^{-1}g_0$. In what follows we choose to only work with $\bar{\pi}^+$ and will write $\Xi_k$ for $g_k^{-1}g_{k+1}$; this is the discrete analogue of $TG/G$ being identified with $\LieAlgebraFont{g}$ with the help of the left Maurer-Cartan form. Note however that this is just a matter of choice. In the case where the Lagrangian is $G$-invariant with respect to the right multiplication, we would define the maps $\bar{\pi}^\pm$ by $\bar{\pi}^+(g_0,g_2)=g_1g_0^{-1}$ and $\bar{\pi}^-(g_0,g_1)=g_0g_1^{-1}$. Also note that contrary to the continuous Euler-Poincar\'e reduction, the discrete reduced tangent space is represented by the manifold $G$ rather than the vector space $\LieAlgebraFont{g}$.

\begin{definition}[\textbf{Discrete reduced map}]
	Let $G$ be a Lie group acting on the left of itself as well as on a manifold $M$. A map $\Phi_{d, a_{\rm ref}}:G\times G\times\R\times\R\to\R$ is \emph{left $G_{a_{\rm ref}}$-invariant} if and only if
	\[
		\Phi_{d, a_{\rm ref}}(gh_0,gh_1,S_0,S_1)=\Phi_{d, a_{\rm ref}}(h_0,h_1,S_0,S_1),
	\]
	for all $g \in G_{a_{\rm ref}}$, $h_0, h_1\in G$, and $S_0, S_1\in\R$. Left $G_{a_{\rm ref}}$-invariance permits us to define a \emph{reduced map} $\varphi_d:G\times\operatorname{Orb}(a_{a_{\rm ref}}) \times\R\times\R\to\R$ by setting
	\[
		\varphi_d(g_0^{-1}g_1,g^{-1}a_{\rm ref},S_0,S_1)=\Phi_{d, a_{\rm ref}}(g_0,g_1,S_0,S_1),
	\]
	for all $g_0, g_1\in G$ and $S_0, S_1\in\R$.
\end{definition}

\begin{definition}[\textbf{Discrete reduced forces}]\label{def:DRF} 
	Let $F^\pm_d:G\times G \times\R\times\R\to T^*G$ denote a pair of discrete external forces as explained in Section \ref{sec:variational-integrators}. This pair is \emph{left $G_{a_{\rm ref}}$-equivariant} if and only if
	\[
		F^\pm_{d,a_\text{ref}}(gh_0,gh_1,S_0,S_1)=g\,F_{d,a_\text{ref}}^\pm(h_0,h_1,S_0,S_1),
	\]
	for all $g \in G_{a_{\rm ref}}$, $h_0, h_1\in G$, and $S_0, S_1\in\R$. Note that $g$ acts on $F_d(h_0,h_1,a,S_0,S_1)\in T_h^*G$ as the cotangent lift of $L_g$. Then we define two \emph{reduced discrete forces} $\mathsf{F}^\pm_d:G\times\operatorname{Orb}(a_{\rm ref}) \times\R\times\R\to\LieAlgebraFont{g}^*$ by setting
	\begin{align*} 
		\mathsf{F}^+_{d,a_\text{ref}}(g_0^{-1}g_1,g_0^{-1}a_{\rm ref},S_0,S_1)&=g_1^{-1}F^+_{d,a_\text{ref}}(g_0,g_1,S_0,S_1)\\
		\mathsf{F}^-_{d,a_\text{ref}}(g_0^{-1}g_1,g_0^{-1}a_{\rm ref},S_0,S_1)&=g_0^{-1}F^-_{d,a_\text{ref}}(g_0,g_1,S_0,S_1),
	\end{align*} 
	for all $g_0, g_1\in G$ and $S_0, S_1\in\R$.
\end{definition}

The following theorem extends the discrete Euler-Poincar\'e reduction developed in \citet{MaPeSh1999}, \cite{BoSu1999} to include thermodynamics.

%\[
%	\Lag_{d,a_\text{ref}}(hg_k, hg_{k+1}, hS_k, hS_{k+1})=\Lag_{d,a_\text{ref}}(g_k, g_{k+1},S_k,S_{k+1})
%\]
%\[
%	\Lag_d(g_k^{-1}g_{k+1},g_k^{-1}a_{\text{ref}},S_k,S_{k+1})=\Lag_{d,a_\text{ref}}(g_k,g_{k+1},S_k,S_{k+1})
%\]
%\begin{align*}
%	F_d^{\text{ext}-}(g_k^{-1}g_{k+1},g_k^{-1}a_\text{ref},S_k,S_{k+1})&=F_{d,a_\text{ref}}^{\text{ext}-}(e,g_k^{-1}g_{k+1},S_k,S_{k+1}),\\
%	F_d^{\text{ext}+}(g_k^{-1}g_{k+1},g_k^{-1}a_\text{ref},S_k,S_{k+1})&=F_{d,a_\text{ref}}^{\text{ext}+}(g_{k+1}^{-1}g_{k},e,S_k,S_{k+1}),
%\end{align*}

\begin{theorem}[\textbf{Discrete Euler-Poincar\'e reduction for simple thermodynamical systems}]\label{thm:DTEP-reduction}
	Let $G$ be a Lie group and let $\LieAlgebraFont{g}$ be its Lie algebra. Suppose that $G$ acts on the left on a manifold $M$. For a fixed parameter $a_\text{ref}\in M$, let:
	\begin{itemize}
		\item $\Lag_{d,a_\text{ref}}:G\times G\times\R\times\R\to\R$ be a discrete $G_{a_\text{ref}}$-invariant Lagrangian,
		\item $F_{d,a_\text{ref}}^{\text{ext}\pm}$, $F_{d,a_\text{ref}}^{\text{fr}\pm}:G\times G\times\R\times\R\to T^*G$ be discrete $G_{a_\text{ref}}$-equivariant external and friction forces,
		\item $P_{d,a_\text{ref}}:G\times G\times\R\times\R\to\R$ the discrete $G_{a_\text{ref}}$-invariant phenomenological constraint.
	\end{itemize}
	Let $\mathsf{\Lag_d}:G\times\Orb(a_\text{ref})\times\R\times\R\to\R$, $\mathsf{F}^{\text{ext}\pm}_d$, $\mathsf{F}_d^{\text{fr}\pm}:G\times\Orb(a_\text{ref})\times\R\times\R\to\LieAlgebraFont{g}^*$ and $\mathsf{P}_d:G\times\Orb(a_\text{ref})\times\R\times\R\to\R$ be the associated reduced maps, given by Definition \ref{def:DRF}. Then the following assertions are equivalent:
 
	\begin{enumerate}[label=(\arabic*), ref=\thetheorem.(\arabic*)]
		\item\label{eq:DTVP} The discrete curve $(g_d,S_d)$ is critical for the discrete variational formulation for nonequilibrium thermodynamics, i.e., 
\begin{align*} 
&\delta\sum_{k=0}^{N-1}\Lag_{d,a_\text{ref}}(g_k,g_{k+1},S_k,S_{k+1})\\
&\quad\quad\quad+\sum_{k=0}^{N-1}\big\langle F_{a_\text{ref}}^{\text{ext}-}(g_k,g_{k+1},S_k,S_{k+1}),\delta g_k\big\rangle
			+\big\langle F_{a_\text{ref}}^{\text{ext}+}(g_k,g_{k+1},S_k,S_{k+1}),\delta g_{k+1}\big\rangle=0,
\end{align*}
subject to the discrete variational and phenomenological constraint
\begin{align*}
&D_3\Lag_{d,a_\text{ref}}(g_k,g_{k+1},S_k,S_{k+1})\delta S_k+D_4\Lag_{d,a_\text{ref}}(g_k,g_{k+1},S_k,S_{k+1})\delta S_{k+1}\\
& \quad \quad\quad\quad=\big\langle F_{a_\text{ref}}^{\text{fr}-}(g_k,g_{k+1},S_k,S_{k+1}),\delta g_k\big\rangle
+\big\langle F_{a_\text{ref}}^{\text{fr}+}(g_k,g_{k+1},S_k,S_{k+1}),\delta g_{k+1}\big\rangle,\\
&P_{d,a_\text{ref}}(g_k,g_{k+1},S_k,S_{k+1})=0,
\end{align*} 
where $\delta g_d$ vanishes at endpoints.
		\item The discrete curve $(g_d,S_d)$ on $G \times \mathbb{R}$ satisfies the equations \eqref{eq:discrete-mechanical-equation} and \eqref{eq:discrete-thermodynamical-equation} for the thermodynamic of simple closed systems.
		
		\item\label{eq:DRTVP} The discrete curve $(\Xi_d,a_d, S_d)$ on $G \times \operatorname{Orb}(a_{\rm ref}) \times \mathbb{R}  $ defined by $\Xi_k=g_k^{-1}g_{k+1}\in G$ and $a_k=g_k^{-1}a_\text{ref} \in \operatorname{Orb}(a_{\rm ref})$, is critical for the \emph{reduced discrete variational formulation of nonequilibrium thermodynamics}, given by the variational condition
\begin{align*} 			&\delta\sum_{k=0}^{N-1}\mathsf{\Lag}_d(\Xi_k,a_k,S_k,S_{k+1})\\
			&\quad\quad\quad+\sum_{k=0}^{N-1}\big\langle\mathsf{F}_d^{\text{ext}-}(\Xi_k,a_k,S_k,S_{k+1}),\eta_k\big\rangle
			+\big\langle\mathsf{F}_d^{\text{ext}+}(\Xi_k,a_k,S_k,S_{k+1}),\eta_{k+1}\big\rangle=0,
\end{align*}
subject to discrete variational and phenomenological constraints
\begin{align*} 
			&D_3\mathsf{\Lag}_d(\Xi_k,a_k,S_k,S_{k+1})\delta S_k+D_4\mathsf{\Lag}_d(\Xi_k,a_k,S_k,S_{k+1})\delta S_{k+1}\\
			&\quad\quad\quad \quad \quad =\big\langle\mathsf{F}_d^{\text{fr}-}(\Xi_k,a_k,S_k,S_{k+1}),\eta_k\big\rangle
			+\big\langle\mathsf{F}_d^{\text{fr}+}(\Xi_k,a_k,S_k,S_{k+1}),\eta_{k+1}\big\rangle,\\[2mm]
			&\mathsf{P}_d(\Xi_k,a_k,S_k,S_{k+1})=0,
\end{align*}
and the discrete Euler-Poincar\'e constraints
		\[
			\delta\Xi_k=-T_eR_{\Xi_k}\eta_k+T_eL_{\Xi_k}\eta_{k+1},\quad\delta a_k=-(\eta_k)_M a_k,
		\]
		where $\eta_d$ is any discrete curve in $\LieAlgebraFont{g}$ with vanishing endpoints.
		
		\item\label{eq:DTEP} The discrete curve $(\Xi_d,a_d, S_d)$ on $G \times \operatorname{Orb}(a_{\rm ref}) \times \mathbb{R}  $ is solution of the \emph{discrete Euler-Poincar\'e equations for simple thermodynamical systems}		\begin{empheq}[left=\empheqlbrace]{align}
			& D_1\mathsf{\Lag}_d(\Xi_k,a_k,S_k,S_{k+1})T_eR_{\Xi_k}\notag\\
			&\quad \quad =D_1\mathsf{\Lag_d}(\Xi_{k-1},a_{k-1},S_{k-1},S_k)T_eL_{\Xi_{k-1}}
			-\mathbf{J}\big(D_2\mathsf{\Lag}_d(\Xi_k,a_k,S_k,S_{k+1})\big)\label{eq:discrete-TEP-equation}\\
			&\quad \quad \;\;\;\;+(\mathsf{F}_d^{\text{ext}-}+\mathsf{F}_d^{\text{fr}-})(\Xi_k,a_k,S_k,S_{k+1})+(\mathsf{F}_d^{\text{ext}+}+\mathsf{F}_d^{\text{fr}+})(\Xi_{k-1},a_{k-1},S_{k-1},S_{k}),\notag\\[2mm]
			& \mathsf{P}_d(\Xi_k,a_k,S_k,S_{k+1})=0,\\[2mm]
			& a_{k+1}=\Xi_k^{-1}a_k\label{eq:discrete-advection-equation}.
		\end{empheq}
	\end{enumerate}
\end{theorem}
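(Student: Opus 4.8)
The plan is to prove the theorem by establishing the two equivalences $(1)\Leftrightarrow(2)$ and $(3)\Leftrightarrow(4)$ by direct discrete variational computation, and then tying the reduced and unreduced variational problems together via $(1)\Leftrightarrow(3)$, exactly paralleling the structure of the continuous Theorem \ref{thm:TEP-reduction}. The equivalence $(1)\Leftrightarrow(2)$ is the discrete Lagrange--d'Alembert computation of Section \ref{sec:variational-integrators}: take variations $\delta g_k$, $\delta S_k$ of the discrete action, use the discrete variational and phenomenological constraints to eliminate the $\delta S_k$, and reindex (discrete integration by parts) to land on \eqref{eq:discrete-mechanical-equation}--\eqref{eq:discrete-thermodynamical-equation}. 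This is essentially quoted from \cite{GBYo2017c} and requires no new idea.

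The substance is $(3)\Leftrightarrow(4)$. Here I would insert the discrete Euler--Poincar\'e constraint $\delta\Xi_k=-T_eR_{\Xi_k}\eta_k+T_eL_{\Xi_k}\eta_{k+1}$ and $\delta a_k=-(\eta_k)_M a_k$ into the variation of the reduced action. Writing $\mu_k^-:=D_1\mathsf{\Lag}_d(\Xi_k,a_k,S_k,S_{k+1})T_eR_{\Xi_k}$ and $\mu_k^+:=D_1\mathsf{\Lag}_d(\Xi_k,a_k,S_k,S_{k+1})T_eL_{\Xi_k}$ (elements of $\LieAlgebraFont{g}^*$ obtained by pulling $D_1\mathsf{\Lag}_d$ back by right/left translation), the $\Xi$-variation produces a pairing $-\langle\mu_k^-,\eta_k\rangle+\langle\mu_k^+,\eta_{k+1}\rangle$; reindexing the second sum shifts $k+1\mapsto k$ so that both terms multiply the same $\eta_k$. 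The $a$-variation $\langle D_2\mathsf{\Lag}_d,-(\eta_k)_M a_k\rangle$ is, by the very definition of the momentum map $\mathbf{J}:T^*M\to\LieAlgebraFont{g}^*$ given in \ref{eq:TEP-equations}, equal to $-\langle\mathbf{J}(D_2\mathsf{\Lag}_d),\eta_k\rangle$. The reduced force terms are handled as in $(1)\Leftrightarrow(2)$: the $\delta S_k$ are eliminated through the reduced variational constraint and collected into the friction contribution. Setting the total coefficient of each free $\eta_k$ to zero yields \eqref{eq:discrete-TEP-equation}, with the boundary term vanishing because $\eta_d$ has vanishing endpoints; the constraint $\mathsf{P}_d=0$ is carried along unchanged, and the advection equation \eqref{eq:discrete-advection-equation} is the plain consequence $a_{k+1}=g_{k+1}^{-1}a_\text{ref}=\Xi_k^{-1}g_k^{-1}a_\text{ref}=\Xi_k^{-1}a_k$ of the definitions.

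The equivalence $(1)\Leftrightarrow(3)$ is then a matter of checking that the unreduced and reduced data agree termwise. Using $G_{a_\text{ref}}$-invariance of $\Lag_{d,a_\text{ref}}$ and $P_{d,a_\text{ref}}$ one has $\Lag_{d,a_\text{ref}}(g_k,g_{k+1},S_k,S_{k+1})=\mathsf{\Lag}_d(\Xi_k,a_k,S_k,S_{k+1})$ and likewise for the constraint, so the action functionals and phenomenological constraints coincide. For the forces, $G_{a_\text{ref}}$-equivariance together with Definition \ref{def:DRF} gives $\langle F^{\text{ext}+}_{a_\text{ref}}(g_k,g_{k+1},\cdots),\delta g_{k+1}\rangle=\langle\mathsf{F}^{\text{ext}+}_d(\Xi_k,a_k,\cdots),\eta_{k+1}\rangle$ and the analogous identity for the $-$ force, exactly as in the displayed computation in the proof of Theorem \ref{thm:TEP-reduction}. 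Finally one must verify that the admissible unreduced variations $\delta g_k$, written as $\eta_k:=g_k^{-1}\delta g_k$, correspond precisely to variations $\eta_d$ vanishing at the endpoints, and that the induced variation of $\Xi_k=g_k^{-1}g_{k+1}$ is the stated constraint $\delta\Xi_k=-T_eR_{\Xi_k}\eta_k+T_eL_{\Xi_k}\eta_{k+1}$, with $\delta a_k=-(\eta_k)_M a_k$; this is the discrete analogue of the continuous constraint $\delta\xi=\dot\eta+[\xi,\eta]$ and follows by differentiating $\Xi_k=g_k^{-1}g_{k+1}$ along a variation, as in \cite{MaPeSh1999}.

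\textbf{The main obstacle} I anticipate is the bookkeeping in $(3)\Leftrightarrow(4)$: correctly tracking which factor is translated by $T_eR_{\Xi_k}$ versus $T_eL_{\Xi_k}$ after reindexing, so that the $k$-th equation genuinely pairs the right-trivialized momentum at index $k$ against the left-trivialized momentum carried from index $k-1$. The asymmetry between the left and right trivializations in the discrete constraint $\delta\Xi_k=-T_eR_{\Xi_k}\eta_k+T_eL_{\Xi_k}\eta_{k+1}$ is exactly what produces the mixed $T_eR$/$T_eL$ structure of \eqref{eq:discrete-TEP-equation}, and getting the index shift and the trivialization sides consistent is the one place where a sign or an index slip would invalidate the result. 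Everything else is a faithful discrete transcription of the continuous proof.
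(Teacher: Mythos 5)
Your proposal is correct and follows essentially the same route as the paper's own proof: the equivalence $(1)\Leftrightarrow(2)$ is delegated to the cited discrete variational computation, $(3)\Leftrightarrow(4)$ is obtained by inserting the discrete Euler--Poincar\'e constraints and reindexing (with the advection equation $a_{k+1}=\Xi_k^{-1}a_k$ read off from the definitions), and $(1)\Leftrightarrow(3)$ is established by matching the functionals and constraints through $G_{a_\text{ref}}$-invariance/equivariance together with the correspondence $\eta_k=g_k^{-1}\delta g_k$ and the computation of $\delta\Xi_k$. If anything, you spell out the $(3)\Leftrightarrow(4)$ bookkeeping more explicitly than the paper, which merely says it is ``done as usual.''
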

\begin{proof}
	The equivalence between (1) and (2) is given in \citet[Theorem 3.8]{GBYo2017a}. Showing that (3) and (4) are equivalent is done as usual; note that the reconstruction equation comes from the definition of $\Xi_k$ and that the advection equation comes from the definition of $a_k$:
	\begin{equation*}
		a_{k+1}=g_{k+1}^{-1}a_\text{ref}=g_{k+1}^{-1}g_kg_k^{-1}a_\text{ref}=\Xi_k^{-1}a_k.
	\end{equation*}
	
	It remains to show that (1) and (3) are equivalent. Firstly, given a variation $\delta g_k$ of $g_k$,  we have by setting $\eta_k=g_k^{-1}\delta g_k=T_eL_{g_k^{-1}}\delta g_k$ that
	\begin{align*}
		\delta\Xi_k&=-g_k^{-1}\delta g_k g_k^{-1}g_{k+1}+g_k^{-1}g_{k+1}g_{k+1}^{-1}\delta g_{k+1}\\
		&=-T_eR_{\Xi_k}\eta_k+T_eL_{\Xi_k}\eta_{k+1}.
	\end{align*}
	Secondly we find that $\delta a_k=-(\eta_k)_Ma_k$ using a computation similar to the one we did in the proof of Theorem \ref{thm:TEP-reduction}.
	
	Conversely, suppose that we are given the curve $\Xi_d$ and a variation $\delta\Xi_d$. We want to find a discrete curve $g_d$ and a discrete variation $\delta g_d$ of $g_d$ starting from curves $\Xi_d$ and $\eta_d$ as above. This is achieved by computing successively $g_{k+1}=g_k\Xi_k$, and by setting $\delta g_k=g_k^{-1}\eta_k$. Since $\eta_d$ is arbitrary and zero at endpoints, $\delta g_d$ is arbitrary and zero at endpoints. We conclude using the left $G_{a_\text{ref}}$-invariance (respectively equivariance) to obtain the variational principle (1) from the variational principle (3).
\end{proof}

\subsection{Discrete Kelvin-Noether theorem}\label{sec:discrete-Kelvin-Noether}

We use the notations of Section \ref{sec:Kelvin-Noether}. As a discrete analogue of the Kelvin-Noether quantity $I$, we consider the map $I_d:\mathcal{C}\times G\times  \operatorname{Orb}(a_{\rm ref}) \times\R\times\R\to\R$ defined by:
\[
I_d(c,\Xi,a,S_0,S_1)=\big\langle\mathcal{K}(c,a),D_1\mathsf{\Lag}_d(\Xi,a,S_0,S_1)T_eR_{\Xi}\big\rangle,
\]
for all $c\in\mathcal{C}$, $\Xi\in G$, $a\in  \operatorname{Orb}(a_{\rm ref}) $ and $S_0$, $S_1\in\R$.

\begin{corollary}[\textbf{Discrete Kelvin-Noether theorem}]\label{cor:discrete-Kelvin-Noether}
	Let $c_\text{ref}\in\mathcal{C}$ fixed and let $(\Xi_d,a_d,S_d)$ be a discrete curve solution of the equations \eqref{eq:discrete-TEP-equation}--\eqref{eq:discrete-advection-equation}. Define $c_k=g_k^{-1}c_\text{ref}$ and $I_k=I_d(c_k,\Xi_k,a_k,S_k,S_{k+1})$. Then, for any $k\in\{1,\dots,N-1\}$, we have
	\begin{align*}
		I_k-I_{k-1}&=\big\langle\mathcal{K}(c_k,a_k),-\mathbf{J}\big(D_2\Lag_d(\Xi_k,a_k,S_k,S_{k+1})\big)\\
		&\quad\quad+(F_d^{\text{ext}-}+F_d^{\text{fr}-})(\Xi_k,a_k,S_k,S_{k+1})+(F_d^{\text{ext}+}+F_d^{\text{fr}+})(\Xi_{k-1},a_{k-1},S_{k-1},S_{k})\big\rangle.
	\end{align*}
\end{corollary}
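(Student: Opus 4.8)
The plan is to mirror the proof of the continuous Kelvin--Noether theorem (Corollary \ref{cor:Kelvin-Noether}), replacing the differentiation of the coadjoint action used there by a discrete telescoping argument based on the reconstruction relation $g_{k+1}=g_k\Xi_k$. The whole identity will come down to writing $I_k$ and $I_{k-1}$ relative to the common reference point $(c_\text{ref},a_\text{ref})$, exposing in their difference exactly the left-hand side of the discrete Euler-Poincar\'e equation \eqref{eq:discrete-TEP-equation}, and then substituting that equation.

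First I would use the $G$-equivariance of $\mathcal{K}$ to transport both quantities to the reference point. Since $c_k=g_k^{-1}c_\text{ref}$ and $a_k=g_k^{-1}a_\text{ref}$, equivariance gives $\mathcal{K}(c_k,a_k)=\operatorname{Ad}_{g_k^{-1}}\mathcal{K}(c_\text{ref},a_\text{ref})$, so that using $\langle\operatorname{Ad}_h\nu,\mu\rangle=\langle\nu,\operatorname{Ad}^*_h\mu\rangle$ one obtains
\[
I_k=\big\langle\mathcal{K}(c_\text{ref},a_\text{ref}),\operatorname{Ad}^*_{g_k^{-1}}\big(D_1\mathsf{\Lag}_d(\Xi_k,a_k,S_k,S_{k+1})\,T_eR_{\Xi_k}\big)\big\rangle,
\]
and the analogous expression for $I_{k-1}$ with $\operatorname{Ad}^*_{g_{k-1}^{-1}}$ and $T_eR_{\Xi_{k-1}}$.

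The crucial bookkeeping step is to bring both terms under the common factor $\operatorname{Ad}^*_{g_k^{-1}}$. I would use the reconstruction relation in the form $g_{k-1}=g_k\Xi_{k-1}^{-1}$ together with the contravariance of the coadjoint action, $\operatorname{Ad}^*_{g_{k-1}^{-1}}=\operatorname{Ad}^*_{g_k^{-1}}\circ\operatorname{Ad}^*_{\Xi_{k-1}}$, and the left/right trivialization identity
\[
D_1\mathsf{\Lag}_d(\Xi,\,\cdot\,)\,T_eL_\Xi=\operatorname{Ad}^*_\Xi\big(D_1\mathsf{\Lag}_d(\Xi,\,\cdot\,)\,T_eR_\Xi\big),
\]
which itself follows from $T_eL_\Xi=T_eR_\Xi\circ\operatorname{Ad}_\Xi$. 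Applying this at $\Xi_{k-1}$ rewrites $I_{k-1}=\big\langle\mathcal{K}(c_\text{ref},a_\text{ref}),\operatorname{Ad}^*_{g_k^{-1}}\big(D_1\mathsf{\Lag}_d(\Xi_{k-1},a_{k-1},S_{k-1},S_k)\,T_eL_{\Xi_{k-1}}\big)\big\rangle$. Subtracting, the shared factor $\operatorname{Ad}^*_{g_k^{-1}}$ comes out and the bracket collapses to precisely $D_1\mathsf{\Lag}_d(\Xi_k,\cdots)T_eR_{\Xi_k}-D_1\mathsf{\Lag}_d(\Xi_{k-1},\cdots)T_eL_{\Xi_{k-1}}$, the left-hand side of \eqref{eq:discrete-TEP-equation}.

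Finally I would substitute the discrete Euler-Poincar\'e equation, so that the bracket becomes $-\mathbf{J}\big(D_2\mathsf{\Lag}_d(\Xi_k,\cdots)\big)+(\mathsf{F}_d^{\text{ext}-}+\mathsf{F}_d^{\text{fr}-})(\Xi_k,\cdots)+(\mathsf{F}_d^{\text{ext}+}+\mathsf{F}_d^{\text{fr}+})(\Xi_{k-1},\cdots)$, and then undo the equivariance transport by pushing $\operatorname{Ad}^*_{g_k^{-1}}$ back inside $\mathcal{K}$, turning $\mathcal{K}(c_\text{ref},a_\text{ref})$ into $\mathcal{K}(c_k,a_k)$ and yielding the stated identity. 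I expect the only real obstacle to be the adjoint/trivialization bookkeeping in the middle step: one must carefully match the right-trivialized momentum hidden in $I_{k-1}$ with the left-trivialized momentum that appears in the equations of motion, so that a single $\operatorname{Ad}^*_{g_k^{-1}}$ can be factored from the difference. Once that alignment is in place, the remainder is a direct discrete transcription of the continuous proof.
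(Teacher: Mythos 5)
Your proposal is correct and follows essentially the same route as the paper: use the equivariance of $\mathcal{K}$ together with the identity $T_eR_\Xi\circ\operatorname{Ad}_\Xi=T_eL_\Xi$ to rewrite $I_{k-1}$ with the left-trivialized momentum $D_1\mathsf{\Lag}_d(\Xi_{k-1},\cdots)T_eL_{\Xi_{k-1}}$ paired against $\mathcal{K}(c_k,a_k)$, then substitute the discrete Euler--Poincar\'e equation \eqref{eq:discrete-TEP-equation}. The only (immaterial) difference is that the paper transports $I_{k-1}$ to $(c_k,a_k)$ in one step via $\mathcal{K}(c_{k-1},a_{k-1})=\operatorname{Ad}_{\Xi_{k-1}}\mathcal{K}(c_k,a_k)$, whereas you detour through the reference point $(c_\text{ref},a_\text{ref})$ and factor out $\operatorname{Ad}^*_{g_k^{-1}}$, exactly as in the continuous proof.
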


\begin{proof}
	From the equivariance property of $\mathcal{K}$ we deduce that $\mathcal{K}(c_{k-1},a_{k-1})=\Ad_{\Xi_k}\mathcal{K}(c_k,a_k)$. Therefore we obtain:
	\begin{equation*}
		I_k-I_{k-1}=\big\langle\mathcal{K}(c_k,a_k),D_1\mathsf{\Lag}_d(\xi_k,a_k,S_k,S_{k+1})T_eR_{\Xi_k}\big\rangle-\big\langle\mathcal{K}(c_k,a_k), D_1\mathsf{\Lag}_d(\Xi_{k-1},a_{k-1},S_{k-1},S_k)T_eL_{\Xi_{k-1}}\big\rangle,
	\end{equation*}
	from which we conclude using equation \eqref{eq:discrete-TEP-equation}.
\end{proof}

\subsection{Group difference maps and reformulation of the discrete evolution equations}\label{sec:group-difference-map}

As can be seen from Theorem \ref{thm:DTEP-reduction}, the reduced discrete evolution equations now take place on $G \times \operatorname{Orb}(a_0)\times \mathbb{R} $, considered here as the discrete reduced extended tangent space $(G\times G \times\mathbb{R})/G_{a_{\rm ref}}$. However, numerically speaking, solving differential equations on manifolds is more difficult that solving differential equations on vector spaces, as it is difficult to design a numerical scheme which ensures that the discrete evolution actually takes place in the manifold. The aim of this section is to transport the equations obtained in Theorem \ref{thm:DTEP-reduction} to the Lie algebra $\LieAlgebraFont{g}$. We will do so by using a \emph{group difference map} as introduced in \citet[Section 4]{BRMa2008}. Simply put, these maps are approximations of the exponential map $\exp:\LieAlgebraFont{g}\to G$, but that still share its main algebraic properties.

\begin{definition}[\textbf{Group difference map}]
	Let $G$ be a Lie group and denote by $\LieAlgebraFont{g}$ its Lie algebra. A \emph{group difference map} is a local diffeomorphism $\tau:\LieAlgebraFont{g}\to G$ mapping a neighborhood $\mathcal{N}_0$ of $0\in\LieAlgebraFont{g}$ to a neighborhood of $e\in G$, and such that $\tau(0)=e$ and $\tau(\xi)^{-1}=\tau(-\xi)$, for any $\xi\in\mathcal{N}_0$.
\end{definition}

\begin{definition}[\textbf{Right trivialized tangent of a group difference map}]
	Let $G$ be a Lie group, $\LieAlgebraFont{g}$ its Lie algebra and $\tau:\LieAlgebraFont{g}\to G$ a group difference map. The \emph{right trivialized tangent} of $\tau$ is the map $\mathrm{d}\tau:\LieAlgebraFont{g}\times\LieAlgebraFont{g}\to\LieAlgebraFont{g}$ defined by
	\[
		\mathbf{D}\tau(\xi)(\delta)=T_eR_{\tau(\xi)}\mathrm{d}\tau_\xi(\delta),
	\]
	for all $\xi$, $\delta\in\LieAlgebraFont{g}$. The \emph{inverse right trivialized tangent} of $\tau$ is the map $\mathrm{d}\tau^{-1}:\LieAlgebraFont{g}\times\LieAlgebraFont{g}\to\LieAlgebraFont{g}$ defined by
	\[
		\mathbf{D}\tau^{-1}(\tau(\xi))(\delta)=\mathrm{d}\tau^{-1}_\xi(T_eR_{\tau(-\xi)}\delta),
	\]
	for all $\xi$, $\delta\in\LieAlgebraFont{g}$. Thus $\mathrm{d}\tau_\xi(\mathrm{d}\tau^{-1}_\xi(\delta))=\delta$, for all $\xi$, $\delta\in\LieAlgebraFont{g}$. Note that $\mathrm{d}\tau$ and $\mathrm{d}\tau^{-1}$ are always linear in their second argument, but not necessarily in the first.
\end{definition}

\begin{proposition}[{\citet[Section 4]{BRMa2008}}]\label{pro:group-difference-map}
	Let $G$ be a Lie group, $\LieAlgebraFont{g}$ its Lie algebra and $\tau:\LieAlgebraFont{g}\to G$ a group difference map. The right trivialized tangent $\mathrm{d}\tau$ of $\tau$ satisfies the following properties:
	\begin{enumerate}[label=(\arabic*), ref=\thetheorem.(\arabic*)]
		\item\label{eq:RTT1} $\mathrm{d}\tau_\xi(\delta)=\Ad_{\tau(\xi)}\mathrm{d}\tau_{-\xi}(\delta)$,
		\item\label{eq:RTT2} $\mathrm{d}\tau_{-\xi}^{-1}(\Ad_{\tau(-\xi)}\delta)=\mathrm{d}\tau_\xi^{-1}(\delta)$.
	\end{enumerate}
\end{proposition}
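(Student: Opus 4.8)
The plan is to prove Proposition \ref{pro:group-difference-map} directly from the defining property $\tau(\xi)^{-1}=\tau(-\xi)$ of a group difference map, together with the definition of the right trivialized tangent $\mathrm{d}\tau$. The two identities \ref{eq:RTT1} and \ref{eq:RTT2} are essentially a single fact viewed from two sides: (1) relates $\mathrm{d}\tau$ at $\xi$ and at $-\xi$ via an adjoint conjugation, and (2) is the corresponding statement for the inverse map $\mathrm{d}\tau^{-1}$. So I would prove (1) first and then derive (2) from it by a formal inversion.

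\textbf{Proof of (1).} The key observation is to differentiate the defining relation $\tau(\xi)\tau(-\xi)=e$, or equivalently to exploit $\tau(-\xi)=\tau(\xi)^{-1}$. First I would recall that $\mathrm{d}\tau_\xi(\delta)=T_{\tau(\xi)}R_{\tau(\xi)^{-1}}\,T_\xi\tau(\delta)$, i.e.\ it is the right-translation-to-the-identity of the pushforward $T_\xi\tau(\delta)$. Now I differentiate the inversion identity. Fix $\xi,\delta\in\LieAlgebraFont{g}$ and consider the curve $s\mapsto\tau(\xi+s\delta)$. Applying the derivative of the group inversion map $\mathrm{Inv}:G\to G$, which satisfies $T_g\mathrm{Inv}=-T_eL_{g^{-1}}\circ T_eR_{g^{-1}}\circ$ (the standard formula $T_g\mathrm{Inv}(v)=-g^{-1}vg^{-1}$ in trivialized notation), to the identity $\tau(-\xi)=\mathrm{Inv}(\tau(\xi))$ yields a relation between $T_{-\xi}\tau(-\delta)$ and $T_\xi\tau(\delta)$. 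Right-trivializing both sides and unwinding the conjugation $g^{-1}(\cdot)g^{-1}$ through $\tau(-\xi)=\tau(\xi)^{-1}$, the $-\delta$ on the left and the overall sign from $\mathrm{Inv}$ cancel, and what survives is precisely $\mathrm{d}\tau_{-\xi}(\delta)=\Ad_{\tau(\xi)^{-1}}\mathrm{d}\tau_\xi(\delta)$, which rearranges to \ref{eq:RTT1}. The main technical care is bookkeeping the left- versus right-trivializations so that $T_eR$ and $T_eL$ factors combine into the single $\Ad_{\tau(\xi)}=T_eR_{\tau(\xi)^{-1}}\circ T_eL_{\tau(\xi)}$.

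\textbf{Proof of (2).} Once (1) is established, (2) should follow by purely algebraic manipulation using the definition of $\mathrm{d}\tau^{-1}$ as the inverse of $\mathrm{d}\tau_\xi$ in its second argument, namely $\mathrm{d}\tau_\xi(\mathrm{d}\tau_\xi^{-1}(\delta))=\delta$. I would apply $\mathrm{d}\tau_{-\xi}$ to the claimed left-hand side of \ref{eq:RTT2}: by definition of the inverse, $\mathrm{d}\tau_{-\xi}\big(\mathrm{d}\tau_{-\xi}^{-1}(\Ad_{\tau(-\xi)}\delta)\big)=\Ad_{\tau(-\xi)}\delta$. On the other hand, applying $\mathrm{d}\tau_{-\xi}$ to the claimed right-hand side $\mathrm{d}\tau_\xi^{-1}(\delta)$ and using \ref{eq:RTT1} in the form $\mathrm{d}\tau_{-\xi}=\Ad_{\tau(\xi)^{-1}}\circ\mathrm{d}\tau_\xi=\Ad_{\tau(-\xi)}\circ\mathrm{d}\tau_\xi$ gives $\Ad_{\tau(-\xi)}\,\mathrm{d}\tau_\xi\big(\mathrm{d}\tau_\xi^{-1}(\delta)\big)=\Ad_{\tau(-\xi)}\delta$. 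Since $\mathrm{d}\tau_{-\xi}$ is a linear isomorphism in its second argument and both sides have the same image under it, they are equal, establishing \ref{eq:RTT2}.

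\textbf{Expected obstacle.} The only delicate point is the sign-and-trivialization bookkeeping in the proof of (1): correctly differentiating the group inversion and keeping track of whether each tangent factor is a left or right translation. Everything else is formal. Since the paper cites \citet[Section 4]{BRMa2008} for this proposition, I expect the intended proof is exactly this short trivialization computation, and I would present (1) carefully and then deduce (2) as above.
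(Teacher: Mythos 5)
Your proposal is correct and matches the paper's argument in essence: the paper proves (1) by differentiating the product identity $\mu(\tau(\xi),\tau(-\xi))=e$ and right-trivializing, which is the same trivialization computation as your differentiation of $\tau(-\xi)=\mathrm{Inv}(\tau(\xi))$ (the formula for $T\mathrm{Inv}$ being itself obtained from that product identity), and it obtains (2) exactly as you do, by substituting $\mathrm{d}\tau^{-1}_\xi(\delta)$ into (1) and inverting. The only cosmetic differences are your use of the inversion map instead of the multiplication map and a harmless looseness in the base points of the translation maps making up $\Ad_{\tau(\xi)}$.
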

\begin{proof}
	For the first property, for any $\xi\in\LieAlgebraFont{g}$ we have $\mu(\tau(\xi),\tau(-\xi))=e$, where $\mu$ denotes the multiplication law in $G$. Differentiating this relation we obtain for any $\delta\in\LieAlgebraFont{g}$:
	\[
		T_{\tau(\xi)}R_{\tau(-\xi)}\mathbf{D}\tau(\xi)(\delta)-T_{\tau(-\xi)}L_{\tau(\xi)}\mathbf{D}\tau(-\xi)(\delta)=0,
	\]
	and finally using the definition of the right trivialized tangent of $\tau$ we obtain
	\[
		\mathrm{d}\tau_\xi(\delta)=T_{\tau(-\xi)}L_{\tau(\xi)}T_eR_{\tau(-\xi)}\mathrm{d}\tau_{-\xi}(\delta).
	\]
	The second property results from an application of the first one, with $\delta$ replaced by $\mathrm{d}\tau^{-1}_\xi(\delta)$.
\end{proof}

Approximations of the exponential map are available in terms of rational fractions, these are the well-known Pad\'e approximants of the exponential. The $(1,1)$ Pad\'e approximant of the exponential is also known as the \emph{Cayley map}, and is widely used in computational geometric mechanics. For more details, see \citet[Section III.4.1 and IV.8.3]{HLW2006} and \citet[Section 4.6]{BRMa2008}.

Let $\tau:\LieAlgebraFont{g}\to G$ be a group difference map and $h>0$ be a time step. We are now going to transport the reduced discrete variational formulation \ref{eq:DRTVP} and the associated discrete equations \ref{eq:DTEP} to the Lie algebra $\LieAlgebraFont{g}$ using this group difference map $\tau$. We define a new discrete Lagrangian $\lag_d:\LieAlgebraFont{g}\times\Orb(a_\text{ref})\times\R\times\R\to\R$ by
\[
	\lag_d(\xi_k,a_k,S_k,S_{k+1})=\mathsf{\Lag}_d(\Xi_k,a_k,S_k,S_{k+1}),\text{ with }\xi_k=\frac{1}{h}\tau^{-1}(\Xi_k).
\]
This can simply be considered as a change of variable. This definition naturally extends to other quantities: the external and friction forces as well as the map giving the discrete phenomenological constraints; we obtain maps $f_d^{\text{ext}\pm}$, $f_d^{\text{fr}\pm}:\LieAlgebraFont{g}\times\Orb(a_\text{ref})\times\R\times\R\to\LieAlgebraFont{g}^*$ and $p_d:\LieAlgebraFont{g}\times\Orb(a_\text{ref})\times\R\times\R\to\R$. Therefore, using Proposition \ref{pro:group-difference-map}, the discrete variational formulation \ref{eq:DRTVP} can be reformulated for a curve $(\xi_d,a_d,S_d)$ as:
\begin{equation}\label{eq:Variat_Cond}
\delta\sum_{k=0}^{N-1}\lag_d(\xi_k,a_k,S_k,S_{k+1})+\sum_{k=0}^{N-1}\big\langle f_d^{\text{ext}-}(\xi_k,a_k,S_k,S_{k+1}),\eta_k\big\rangle
	+\big\langle f_d^{\text{ext}+}(\xi_k,a_k,S_k,S_{k+1}),\eta_{k+1}\big\rangle=0,
\end{equation} 
subject to the discrete variational and phenomenological constraints
\begin{equation}\label{eq:Variat_Const} 
\begin{aligned}
&D_3\lag_d(\xi_k,a_k,S_k,S_{k+1})\delta S_k+D_4\lag_d(\xi_k,a_k,S_k,S_{k+1})\delta S_{k+1}\\
	&\quad\quad\quad \quad \quad =\big\langle f_d^{\text{fr}-}(\xi_k,a_k,S_k,S_{k+1}),\eta_k\big\rangle
	+\big\langle f_d^{\text{fr}+}(\xi_k,a_k,S_k,S_{k+1}),\eta_{k+1}\big\rangle,\\[2mm]
	&p_d(\xi_k,a_k,S_k,S_{k+1})=0,
\end{aligned}
\end{equation}  
and the discrete Euler-Poincar\'e constraints
\begin{equation}\label{eq:EP_Const}
	\delta\xi_k=-\frac{1}{h}\mathrm{d}\tau^{-1}_{h\xi_k}(\eta_k)+\frac{1}{h}\mathrm{d}\tau^{-1}_{-h\xi_k}(\eta_{k+1}),\quad\delta a_k=-(\eta_k)_M a_k,
	\end{equation} 
where $\eta_d$ is any discrete curve in $\LieAlgebraFont{g}$ with vanishing endpoints. By applying the discrete variational formulation \eqref{eq:Variat_Cond}--\eqref{eq:EP_Const} we get the following reformulation of equations \ref{eq:DTEP}:
\begin{empheq}[left=\empheqlbrace]{align*}
	& (\mathrm{d}\tau^{-1}_{h\xi_k})^*D_1\lag_d(\xi_k,a_k,S_k,S_{k+1})\\
	& \quad \quad =(\mathrm{d}\tau^{-1}_{-h\xi_{k-1}})^*D_1\lag_d(\xi_{k-1},a_{k-1},S_{k-1},S_k)
	-h\mathbf{J}\big(D_2\lag_d(\xi_k,a_k,S_k,S_{k+1})\big)\\
	&\quad \quad \;\;\;\;+h(f_d^{\text{ext}-}+f_d^{\text{fr}-})(\xi_k,a_k,S_k,S_{k+1})+h(f_d^{\text{ext}+}+f_d^{\text{fr}+})(\xi_{k-1},a_{k-1},S_{k-1},S_{k}),\\[2mm]
	& p_d(\xi_k,a_k,S_k,S_{k+1})=0,\\[2mm]
	& a_{k+1}=\tau(-h\xi_k)a_k.
\end{empheq}

\begin{remark}
	For the kind of variational integrators that we have presented, if we want to define energy properly at the discrete level, then the time step $h$ has to be promoted to a full dynamic variable as well, meaning that the {discrete} Lagrangian of the system {is interpreted as being time-dependent} and thus the {discrete} Euler-Lagrange equations decompose into the usual dynamical part and an equation enforcing energy conservation in addition. See \cite{KMO1999} and \cite{LeDi2002} for such an approach in the case of mechanical systems without thermal effects.
\end{remark}

Finally the Kelvin-Noether quantity introduced in Section \ref{sec:discrete-Kelvin-Noether} can be reformulated as:
\begin{equation}\label{I_d}  
	I_d(c,\xi,a,S_0,S_1)=\big\langle\mathcal{K}(c,a),(\mathrm{d}\tau^{-1}_{h\xi})^*D_1\lag_d(\xi,a,S_0,S_1)\big\rangle,
\end{equation} 
for any $c\in\mathcal{C}$, $\xi\in\LieAlgebraFont{g}$, $a\in\Orb(a_\text{ref})$, $S_0, S_1\in\R$; and the discrete Kelvin-Noether Theorem \ref{cor:discrete-Kelvin-Noether} now reads:
\begin{equation}\label{DKN}
\begin{aligned}
I_k-I_{k-1}&=\big\langle\mathcal{K}(c_k,a_k),-h\mathbf{J}\big(D_2\lag_d(\xi_k,a_k,S_k,S_{k+1})\big)\\
	&\quad\quad+h(f_d^{\text{ext}-}+f_d^{\text{fr}-})(\xi_k,a_k,S_k,S_{k+1})+h(f_d^{\text{ext}+}+f_d^{\text{fr}+})(\xi_{k-1},a_{k-1},S_{k-1},S_{k})\big\rangle.
\end{aligned} 
\end{equation} 
\section{An heavy top in Stokes flow}\label{sec:ball}

We now illustrate the variational discretization developed above with the example of a heavy top moving in a Stokes flow.
In the situation we consider, the motion of the top completely determines the motion of the Stokes flow, through the no-slip boundary condition at the fluid-body interface, as determined in \cite{La1975}, \cite{BrHa1983}, \cite{KiKa1991}. The torque exerted by the viscous fluid is interpreted as a friction force responsible for entropy production, at the origin of the irreversible character of the system. Since the system is isolated, the total energy, composed of the mechanical energy and the internal energy, is conserved. Note however that this conservation law is not due to the existence of a Hamiltonian structure, but is rather the reflection of the first law of thermodynamics applied to the system. As we will observe, our numerical scheme reproduces notably this energy conservation at the discrete level, {whereas a standard discretization, possibly of higher order, will not do so, in general}.

\begin{remark} 
	The considered example here is a toy model whose main purpose is to illustrate our integrator in the simplest possible situation, and serves as the basis for forthcoming developments.
\end{remark}

\subsection{The system and its variational formulation}

We consider a simplified example of a heavy top rotating in a viscous fluid modeled by a Stokes flow. The top is composed of a ball of radius $a$, total mass $m$ and moment of inertia tensor $\mathbb{I}$ (diagonalized in the body principal axes). This rigid body will be denoted by $\mathcal{B}$, and its boundary $\partial\mathcal{B}$ is the sphere bounding the ball. Let $(C,\mathbf{e}_1,\mathbf{e}_2,\mathbf{e}_3)$ be the canonical orthonormal frame of $\R^3$, $C$ being the \emph{fixed} geometric center of the ball, around which the ball rotates. The center of mass of the ball will be denoted by $G$; in the case where $\mathbb{I}$ is not proportional to the identity, then $G$ does not coincide with $C$.

Forgetting gravity for the moment, remembering that the center of the ball $C$ is fixed, the configuration space of $\mathcal{B}$ is the Lie group $G=\SO(3)$, so a configuration of $\mathcal{B}$ is a rotation matrix $R$ in $\R^3$ and the kinematics of $\mathcal{B}$ is given by a curve $t\in[0,T]\mapsto R(t)\in\SO(3)$. We recall a few elementary facts concerning the Lie algebra $\so(3)$ of the configuration space:
\begin{itemize}
	\item The vector spaces $\so(3)$ and $\R^3$ are isomorphic, the isomorphism being given by the hat map $\hat{}:\R^3\mapsto\so(3)$,
	\begin{equation}\label{hat_map} 
		\mathbf{W}=(W_1,W_2,W_3)\mapsto\hat{\mathbf{W}}=\mathbf{W}\times\cdot=
		\begin{pmatrix}
			0 & -W_3 & W_2\\
			W_3 & 0 & -W_1\\
			-W_2 & W_1 & 0
		\end{pmatrix}.
	\end{equation} 
	
	\item The Lie algebras $\so(3)$, with matrix commutator $[\cdot,\cdot]$, and $\R^3$, with cross product $\times$, are isomorphic, meaning that $\widehat{\mathbf{V}\times\mathbf{W}}=[\hat{\mathbf{V}},\hat{\mathbf{W}}]$, for all $\mathbf{V}$, $\mathbf{W}\in\R^3$.
	
	\item We endow $\R^3$ with the usual inner product $\mathbf{V}\cdot\mathbf{W}=\mathbf{V}^\mathsf{T}\mathbf{W}$ and $\so(3)$ with the inner product $\langle\hat{\mathbf{V}},\hat{\mathbf{W}}\rangle=\frac{1}{2}\Tr(\hat{\mathbf{V}}^\mathsf{T}\hat{\bm{W}})$. Then the hat map is isometric: $\langle\hat{\mathbf{V}},\hat{\mathbf{W}}\rangle=\mathbf{V}\cdot\mathbf{W}$, for all $\mathbf{V}$, $\mathbf{W}\in\R^3$.
	\item If the Lie group $\SO(3)$ acts on $\R^3$ by left multiplication and on $\so(3)$ by the adjoint representation which is matrix conjugation, then the hat map is equivariant: $\widehat{R\mathbf{W}}=R\hat{\mathbf{W}}R^{-1}$, for all $R\in\SO(3)$ and $\mathbf{W}\in\R^3$.
\end{itemize}

If $\mathbf{X}\in\mathcal{B}$ is a point of the \emph{reference configuration}, $\mathbf{X}$ is transformed after $t$ units of time into a point $\mathbf{x}(t)=R(t)\mathbf{X}\in R(\mathcal{B})$ of the \emph{actual configuration}. The \emph{material velocity} of $\mathbf{X}$ (in the reference configuration) is $\dot{R}\mathbf{X}$, whereas the \emph{spatial velocity} of $\mathbf{x}$ (in the actual configuration) is $\dot{R}R^{-1}\mathbf{x}$. Since $R\in\SO(3)$, we can write $\dot{R}R^{-1}=\hat{\bm{\omega}}$ for the curve $t\in[0,T]\mapsto\bm{\omega}(t)\in\R^3$ called the \emph{spatial angular velocity} of $\mathcal{B}$. Let $(C,\mathbf{b}_1,\mathbf{b}_2,\mathbf{b}_3)$ be the so-called \emph{body frame}, that is, the orthonormal frame associated to $(C,\mathbf{e}_1,\mathbf{e}_2,\mathbf{e}_3)$ that moves according to the motion of $\mathcal{B}$, meaning that $\mathbf{b}_i(t)=R(t)\mathbf{e}_i$, for $t\in[0,T]$ and $i\in\{1,2,3\}$. The \emph{body angular velocity} is $\bm{\Omega}=R^{-1}\bm{\omega}$, or $\hat{\bm{\Omega}}=R^{-1}\dot{R}$ in matrix terms. For more details about rigid bodies, see \citet[Chapter 15]{MaRa1999}. 

The kinetic energy of $\mathcal{B}$ is given in the material formalism by
\[
	K_\mathcal{B}(R,\dot{R})=\frac{\rho}{2}\int_{\mathcal{B}}\|\dot{R}\mathbf{X}\|^2\,\mathrm{d}\mathbf{X},
\]
for any $(R,\dot{R})\in T\SO(3)$, and where $\rho$ is the volumetric mass density of $\mathcal{B}$. The kinetic energy is clearly $\SO(3)$-invariant.

In order to take into account gravity, we introduce $\bm{\chi}$ as the unit vector of the line going from the fixed point $C$ to the center of mass $G$. The potential energy of $\mathcal{B}$ is given in the material formalism by
\[
	V_\mathcal{B}(R)=mg\ell\mathbf{e}_3\cdot R\bm{\chi}=mg\ell R^{-1}\mathbf{e}_3\cdot\bm{\chi},
\]
where $g$ is the gravitational acceleration constant, and $\ell\bm{\chi}$ is the vector from the fixed point of the top to its center of mass (at time zero). However, we see that the $\SO(3)$-invariance is broken upon introducing gravity, as the potential energy is only invariant with respect to rotations that preserve $\mathbf{e}_3$, resulting in a $\SO(2)$-invariance instead. Thus, instead of the $\SO(3)$-invariance, we will consider invariance relatively to $\SO(3)_{\mathbf{e}_3}$ the subgroup of $\SO(3)$ that preserve $\mathbf{e}_3$, and consider $\mathbf{e}_3$ as a \emph{parameter} of the full Lagrangian (so we choose $M=\R^3$ for the space of advected parameters).

% To address this issue, the vector $\mathbf{e}_3$ will be replaced by an arbitrary vector $\mathbf{v}\in\R^3$, that will be replaced by $\mathbf{e}_3$ at the end. This new variable $\mathbf{v}$ is a \emph{parameter} of the system and will be advected in the reduced dynamics as we shall see below\footnote{For that matter $\R^3$ is identified with its dual with the help of the Euclidean inner product.}. Consequently the potential energy of $\mathcal{B}$ now reads:
% \[
% 	V_\mathcal{B}(R,\mathbf{v})=mg{a}R^{-1}\mathbf{v}\cdot\bm{\chi},
% \]
% and is manifestly $\SO(3)$-invariant since for any $B\in\SO(3)$ we have
% \[
% 	V_\mathcal{B}(BR,B\mathbf{v})=mg{a}R^{-1}B^{-1}B\mathbf{v}\cdot\bm{\chi}=U_\mathcal{B}(R,\mathbf{v}).
% \]

The environment of the heavy top is modeled by an unbounded fluid $\mathcal{F}$, supposed to be of constant volumetric mass density $\rho_\mathcal{F}$, incompressible, Newtonian with dynamic viscosity $\mu$, and at rest far from the top. Its flow is governed by the Navier-Stokes equations, however, we will make use of the Stokes approximation, that permits us to find an analytical expression for the velocity field of the fluid as well as the total force and torque exerted by the fluid on the heavy top in this particular setting of a spherical geometry. Let $\mathbf{u}(t,\mathbf{x})$ be the velocity field of the fluid and $\bar{\sigma}=-\bar{p}\Id_3+2\mu D(\mathbf{u})$ its Cauchy stress tensor, $\bar{p}=p+\rho_\mathcal{F}gz$ being the pressure and $D(\mathbf{u})$ being the strain rate tensor. Then the Stokes equations together with the appropriate boundary conditions read:
\[
	\begin{cases}
		\operatorname{div}\bar{\sigma}=0\Longleftrightarrow\Delta\mathbf{u}(t,\mathbf{x})=\frac{1}{\mu}\nabla\bar{p}(t,\mathbf{x})\\
		\nabla\cdot\mathbf{u}(t,\mathbf{x})=0\\
		\mathbf{u}(t,\mathbf{x})=\bm{\omega}(t)\times\mathbf{x}\text{ for any }\mathbf{x}\in\partial\mathcal{B}\\
		\mathbf{u}(t,x)\longrightarrow 0\text{ when }\|\mathbf{x}\|\to+\infty
	\end{cases}.
\]
Recall that the Stokes approximation is a quasi-stationary approximation and that physically speaking, this approximation is valid at low Reynolds numbers only. Let $\mathbf{f}(t)$ be the total force and $\bm{\tau}_C(t)$ the total torque with respect to the origin $C$ exerted on the sphere by the fluid, in the spatial formalism. Denoting by $\partial\mathcal{B}^+$ the boundary sphere oriented by the unit normal $\mathbf{n}^+$ pointing from the ball towards the fluid, these are defined by:
\[
	\mathbf{f}=\int_{\partial\mathcal{B}^+}\bar{\sigma}\cdot\mathbf{n}^+\,\mathrm{d}S,\quad\bm{\tau}_C=\int_{\partial\mathcal{B}^+}\mathbf{x}\times(\bar{\sigma}\cdot\mathbf{n}^+)\,\mathrm{d}S.
\]
Using the general solution of Lamb for a spherical coordinate system, as well as the boundary condition $\mathbf{u}(t,\mathbf{x})=\bm{\omega}(t)\times\mathbf{x}$ at any spatial point $\mathbf{x}$ of $\partial\mathcal{B}$, we can compute that (see \citet[Chapter 3]{BrHa1983}, \citet[Example 4.2]{KiKa1991}, or \citet[Article 337]{La1975} for the full computation, which is not straightforward):
\[
	\mathbf{u}(t,\mathbf{x})=\frac{a^3}{\|\mathbf{x}\|^3}\bm{\omega}(t)\times\mathbf{x},\;\;{\text{for all $|\mathbf{x}|\geq a$}} ,
	\qquad\mathbf{f}(t)=0,
	\qquad\bm{\tau}_C(t)=-8\pi\mu a^3\bm{\omega}(t).
\]
Note that since $\mathbf{f}=0$, the total torque with respect to the center of mass $G$ is equal to $\bm{\tau}_C$ and will be denoted simply by $\bm{\tau}$. In the material formalism we will denote this torque by $F_{\mathbf{e}_3}$, it is given by
\begin{equation}\label{eq:ball-friction}
	F_{\mathbf{e}_3}(R,\dot{R},S)=\hat{\bm{\tau}}R=-8\pi\mu a^3\dot{R}.
\end{equation}

This torque is the result of the viscosity of the fluid.
Thermodynamically, the \emph{simple} system we want to consider is composed of both the top and the fluid, therefore we introduce one entropy variable $S\in\R$ that describes the entropy of both the top and the fluid. Remembering that the kinetic energy of a fluid in Stokes flow is always neglected, the full $\SO(3)_{\mathbf{e}_3}$-invariant Lagrangian of the system $\Lag_{\mathbf{e}_3}:T\SO(3)\times\R\to\R$ is given by
\begin{equation}\label{eq:ball-lagrangian}
	\Lag_{\mathbf{e}_3}(R,\dot{R},S)=K_\mathcal{B}(R,\dot{R})-V_\mathcal{B}(R)-U_{\mathcal{B}}(S),
\end{equation}
where $U_\mathcal{B}(S)$ denotes the internal energy of the top $\mathcal{B}$, which we will make more explicit later. The reason why the internal energy of the fluid is neglected is because the change in the internal energy of the fluid happens only locally, around the ball, whereas the fluid is considered as infinite. The temperature of the system will be denoted by $T=-\frac{\partial\Lag_{\mathbf{e}_3}}{\partial S}=\frac{\partial U_\mathcal{B}}{\partial S}$; it is the temperature of the top as well as of the fluid, in that particular simplified model. Additionally, there are no external forces, but there is a friction force already mentioned above and that will be the sole friction force acting on the system, and therefore the one that creates entropy. Note also that since the system we consider is composed of both the top and the fluid, there is no external heat transfer. Therefore, the full variational formulation \ref{eq:TVP} in the material formalism reads:
\begin{equation}\label{eq:ball-TVP}
		\delta\int_0^T\left[\int_\mathcal{B}\|\dot{R}\mathbf{X}\|^2\mathrm{d}\mathbf{X}-mg\ell R^{-1}\mathbf{e}_3\cdot\bm{\chi}-U_\mathcal{B}(S)\right]\mathrm{d}t=0,
\end{equation}
subject to the variational and phenomenological constraints
\[
\delta S=\frac{8\pi\mu a^3}{T}\big \langle \dot{R},\delta R \big\rangle \quad\text{and}\quad \dot{S}=\frac{8\pi\mu a^3}{T} \big\langle \dot{R},\dot{R} \big\rangle .
\]

Now we can proceed to the reduction of the system as developed in Section \ref{sec:TEP-reduction}. The kinetic energy of the top is reduced to 
\[
	\frac{\rho}{2}\int_\mathcal{B}\|R^{-1}\dot{R}\mathbf{X}\|^2\,\mathrm{d}\mathbf{X}=\frac{\rho}{2}\int_\mathcal{B}\|\bm{\Omega}\times\mathbf{X}\|^2\,\mathrm{d}\mathbf{X}=\frac{1}{2}\mathbb{I}\bm{\Omega}\cdot\bm{\Omega},
\]
where $\mathbb{I}$ is the inertia tensor of the body. Introducing the reduced variable $\bm{\Gamma}=R^{-1}\mathbf{e}_3$ for $\mathbf{e}_3$, which can be interpreted as the direction of gravity as seen from the heavy top, we obtain from \eqref{eq:ball-lagrangian} the reduced Lagrangian $\lag:\so(3)\times\Orb(\mathbf{e}_3)\times\R\to\R$ given by:
\[
	\lag(\hat{\bm{\Omega}},\bm{\Gamma},S)=\frac{1}{2}\mathbb{I} {\bm{\Omega}}\cdot {\bm{\Omega}}-mg\ell\bm{\Gamma}\cdot\bm{\chi}-U_\mathcal{B}(S).
\]
The friction force due to viscosity now reads: 
\[
	f(\hat{\bm{\Omega}},\bm{\Gamma},S)
	=F_{\mathbf{e}_3}(e,\hat{\bm{\Omega}},S)
	=R^{-1}F_{\mathbf{e}_3}(R,\dot{R},S)
	=-8\pi\mu a^3\hat{\bm{\Omega}}.
\]
Hence the reduced variational formulation \ref{eq:RTVP} in body coordinates is:
\[
\delta \int_0^T\left[\frac{1}{2}\mathbb{I}\bm{\Omega}\cdot\bm{\Omega}-mg{\ell}\bm{\Gamma}\cdot\bm{\chi}-U_\mathcal{B}(S)\right]\mathrm{d}t=0,
\]
subject to the variational and phenomenological constraints
\[
\delta S=\frac{8\pi\mu a^3}{T}\bm{\Omega}\cdot\bm{\Sigma}, \qquad \dot{S}=\frac{8\pi\mu a^3}{T}\|\bm{\Omega}\|^2,
\]
and the Euler-Poincar\'e constraints
\[
\delta\bm{\Omega}=\dot{\bm{\Sigma}}+\bm{\Omega}\times\bm{\Sigma},\quad\delta\bm{\Gamma}=-\bm{\Sigma}\times\bm{\Gamma}, 
\]
where $\bm{\Sigma}:[0,T]\to\R^3$ is an arbitrary curve {vanishing at $t=0,T$}.

%For the purpose of the upcoming discretization, we recast this principle entirely on $\so(3)$ rather than $\R^3$:
%\begin{equation}
%	\everymath={\displaystyle}
%	\begin{cases}
%		\int_0^T\left[\frac{1}{2}\left\langle\hat{\bm{\Omega}},\widehat{\mathbb{I}\bm{\Omega}}\right\rangle-\frac{1}{2}mg{\ell}\hat{\bm{\Gamma}}:\hat{\bm{\chi}}-U_\mathcal{B}(S)\right]\mathrm{d}t=0\\[0.4cm]
%		\delta\hat{\bm{\Omega}}=\dot{\hat{\bm{\Sigma}}}+[\hat{\bm{\Omega}},\hat{\bm{\Sigma}}],\quad
%		\delta\hat{\bm{\Gamma}}=-[\hat{\bm{\Sigma}},\hat{\bm{\Gamma}}],\quad
%		\delta S=\frac{8\pi\mu a^3}{T}\langle\hat{\bm{\Omega}},\hat{\bm{\Sigma}}\rangle\\[0.3cm]
%		\dot{S}=\frac{8\pi\mu a^3}{T}\langle\hat{\bm{\Omega}},\hat{\bm{\Omega}}\rangle\\[0.3cm]
%		\dot{\hat{\bm{\Gamma}}}=-[\hat{\bm{\Omega}},\hat{\bm{\Gamma}}]
%	\end{cases}.
%\end{equation}
This variational principle yields, after extremizing the action functional above, the reduced equations \ref{eq:TEP-equations} in body coordinates for our system. Introducing the angular momentum in body coordinates $\bm{\Pi}=\widehat{\mathbb{I}\bm{\Omega}}$, these equations are:
\begin{equation}\label{eq:ball-TEP}
	\everymath{\displaystyle}
	\begin{cases}
		\dot{\bm{\Pi}}+\bm{\Omega}\times\bm{\Pi}=-8\pi\mu a^3\bm{\Omega}+mg\ell\bm{\Gamma}\times\bm{\chi}\\[0.2cm]
		\dot{S}=\frac{8\pi\mu a^3}{T}\|\bm{\Omega}\|^2\\[0.2cm]
		\dot{\bm{\Gamma}}=-\bm{\Omega}\times\bm{\Gamma}
	\end{cases}.
	%\Longleftrightarrow\quad
	%\begin{cases}
	%	\dot{\hat{\bm{\Pi}}}+[\hat{\bm{\Omega}},\hat{\bm{\Pi}}]=-8\pi\mu a^3\hat{\bm{\Omega}}+mg\ell[\hat{\bm{\Gamma}},\hat{\bm{\chi}}]\\[0.2cm]
	%	\dot{S}=\frac{8\pi\mu a^3}{T}\langle\hat{\bm{\Omega}},\hat{\bm{\Omega}}\rangle\\[0.2cm]
	%	\dot{\hat{\bm{\Gamma}}}=-[\hat{\bm{\Omega}},\hat{\bm{\Gamma}}]
	%\end{cases}.
\end{equation}

We will assume that the internal energy ${U}_\mathcal{B}(S)$ of the rigid body follows the Dulong-Petit law (see \cite{PeDu1819}): ${U}_\mathcal{B}(S)=3N_0RT(S)$, where $N_0$ is the number of moles of the rigid body and $R$ is the universal gas constant. {From this expression and the definition of temperature we deduce that the rigid body temperature depends on its entropy in the following way:
\[
T=T_0\exp\left(\frac{S-S_0}{3N_0R}\right).
\]
}

The total energy of the system is given in body coordinates by:
\[
	e(\hat{\bm{\Omega}},\bm{\Gamma},S)=\frac{1}{2}\mathbb{I}\bm{\Omega}\cdot\bm{\Omega}+mg\ell\bm{\Gamma}\cdot\bm{\chi}+U_\mathcal{B}(S)%=\frac{1}{2}\left\langle\hat{\bm{\Omega}},\widehat{\mathbb{I}\bm{\Omega}}\right\rangle+\frac{1}{2}mg\ell\hat{\bm{\Gamma}}:\hat{\bm{\chi}}+U_\mathcal{B}(S),
\]
and the energy balance is simply $\frac{\mathrm{d}e}{\mathrm{d}t}=0$ (see Corollary \ref{cor:energy-balance}). Finally concerning the Kelvin-Noether quantity (see Corollary \ref{cor:Kelvin-Noether}), we choose $\mathcal{C}=\so(3)$ and $\mathcal{K}:\so(3)\times\R^3\to\so(3)$, $(\hat{\mathbf{W}},\hat{\bm{\Gamma}})\mapsto\hat{\mathbf{W}}$. Then the Kelvin-Noether theorem in the particular case where $\mathbf{W}=\bm{\Gamma}$ yields the rate of change of the spatial angular momentum $\bm{\pi}=R\bm{\Pi}$ of the heavy top:
\begin{equation}\label{KN_example}
{\frac{\mathrm{d}\pi_3}{\mathrm{d}t}= \frac{\mathrm{d}}{\mathrm{d}t}\mathbb{I}  \boldsymbol{\Omega}\cdot\boldsymbol{\Gamma} = -8\pi\mu a^3 \boldsymbol{\Gamma}\cdot \boldsymbol{\Omega}}.
\end{equation}

\subsection{Variational discretization}

We will work directly from the setting introduced in Section \ref{sec:group-difference-map}. Let $h>0$ be the time step. We need to choose a group difference map $\tau$, a finite difference map $\varphi$, and use these to build the discrete Lagrangian $\lag_d$, the discrete friction forces $f^{\pm}_d$, and the discrete phenomenological constraint $p_d$. The curve $t\mapsto R(t)\in\SO(3)$ is discretized into a sequence $R_k\in\SO(3)$, $k\in\{0,\dots,N\}$. The intermediary variable that was previously denoted by $\Xi_k$ is actually $R_k^{-1}R_{k+1}$. The body angular velocity curve $t\mapsto\hat{\bm{\Omega}}(t)\in\so(3)$ is discretized into a sequence $\hat{\bm{\Omega}}_k\in\so(3)$, $k\in\{0,\dots,N\}$; recall from Section \ref{sec:group-difference-map} that the $\hat{\bm{\Omega}}_k$ are related to the $R_k$ through the relation
\begin{equation}\label{def_omega_k} 
	\hat{\bm{\Omega}}_k=\frac{1}{h}\tau^{-1}(R_k^{-1}R_{k+1}).
\end{equation} 
The advected parameter curve $t\mapsto\bm{\Gamma}(t)\in\R^3$, is discretized into a sequence $\bm{\Gamma}_k\in\R^3$, and the entropy curve $t\mapsto S(t)$ is discretized into a sequence $S_k\in\R$, for $k\in\{0,\dots,N\}$.

For the group difference map, we choose $\tau=\operatorname{cay}:\so(3)\to\SO(3)$ the Cayley map given by (see \citet[IV.8.3]{HLW2006} for details):
\[
	\operatorname{cay}(\hat{\bm{\Omega}})=\bigg(\Id-\frac{\hat{\bm{\Omega}}}{2}\bigg)^{-1}\bigg(\Id+\frac{\hat{\bm{\Omega}}}{2}\bigg),
	\quad\mathrm{d}\operatorname{cay}^{-1}_{\hat{\bm{\Omega}}}({\hat{\bm{\Psi}}})=\bigg(\Id-\frac{\hat{\bm{\Omega}}}{2}\bigg)\hat{\bm{\Psi}}\bigg(\Id+\frac{\hat{\bm{\Omega}}}{2}\bigg).
\]
Remember that the group difference map is responsible for passing from the Lie algebra $\so(3)$ to the Lie group $\SO(3)$, and as such, constitutes an approximation of the exponential map. Also remark that since the Cayley map is expressed in matrix terms only, at the discrete level we will work on $\so(3)$ rather than $\R^3$ exclusively. 

For the finite difference map $\varphi:\SO(3)^2\times\R^2\to T\SO(3)\times T\R$ we choose:
\[
	\varphi(R_k,R_{k+1},S_k,S_{k+1})=\left(R_k,R_k\hat{\bm{\Omega}}_k,S_k,\frac{S_{k+1}-S_k}{h}\right),
\]
{where $\hat{\boldsymbol{\Omega} }_k $ is defined by \eqref{def_omega_k}.} 
For the discrete Lagrangian, we first define $\Lag_{d,\mathbf{e}_3}$ by setting $\Lag_{d,\mathbf{e}_3}=hL_{\mathbf{e}_3}\circ\varphi$, which simply reads:
\[
	\Lag_{d,\mathbf{e}_3}(R_k,R_{k+1},S_k,S_{k+1})=h\Lag_{\mathbf{e}_3}(R_k,R_k\hat{\bm{\Omega}}_k,S_k).
\]
After reduction we obtain:
\[
	\mathsf{\Lag}_d(R_k^{-1}R_{k+1},\bm{\Gamma}_k,S_k,S_{k+1})=h\Lag_{\mathbf{e}_3}(e,\hat{\bm{\Omega}}_k,S_k),
\]
which by definition yields:
\begin{equation}\label{discrete_Lagr} 
	\lag_d(\hat{\bm{\Omega}}_k,\bm{\Gamma}_k,S_k,S_{k+1})=h\lag(\hat{\bm{\Omega}}_k,\bm{\Gamma}_k,S_k)
	=\frac{h}{2}\big\langle\hat{\bm{\Omega}}_k,\widehat{\mathbb{I}{\bm{\Omega}}_k}\big\rangle-hmg\ell\langle\hat{\bm{\Gamma}}_k,\hat{\bm{\chi}}\rangle-hU_\mathcal{B}(S_k).
\end{equation} 

For the discrete friction forces $F_{d,\mathbf{e}_3}^\pm$ that result from the discretization of $F_{\mathbf{e}_3}$, first remember that we must have an approximation of the form:
\begin{align*}
	\int_{t_k}^{t_{k+1}}\left\langle F_{\mathbf{e}_3}(R(t),\dot{R}(t),S(t)),\delta R(t)\right\rangle\mathrm{d}t
	\approx\big\langle & F_{d,\mathbf{e}_3}^-(R_k,R_{k+1},S_k,S_{k+1}),\delta R_k\big\rangle\\
	&\quad+\big\langle F_{d,\mathbf{e}_3}^+(R_k,R_{k+1},S_k,S_{k+1}),\delta R_{k+1}\big\rangle.
\end{align*}
We choose to approximate the integral by the trapezoidal rule:
\begin{align*}
	\int_{t_k}^{t_{k+1}}\left\langle F_{\mathbf{e}_3}(R(t),\dot{R}(t),S(t)),\delta R(t)\right\rangle\mathrm{d}t
	\approx\frac{h}{2}\Big(\big\langle & F_{\mathbf{e}_3}(R(t_k),\dot{R}(t_k),S(t_k)),\delta R(t_k)\big\rangle\\
	&\quad+\big\langle F_{\mathbf{e}_3}(R(t_{k+1}),\dot{R}(t_{k+1}),S(t_{k+1})),\delta R(t_{k+1})\big\rangle\Big),
\end{align*}
and then the two terms on the right are approximated using the finite difference map $\varphi$:
\begin{gather*}
	F_{d,\mathbf{e}_3}^+(R_k,R_{k+1},S_k,S_{k+1})=\frac{h}{2}F_{\mathbf{e}_3}(R_{k+1},R_{k+1}\hat{\bm{\Omega}}_{k+1},S_{k+1})\in T^*_{R_{k+1}}\SO(3),\\
	F_{d,\mathbf{e}_3}^-(R_k,R_{k+1},S_k,S_{k+1})=\frac{h}{2}F_{\mathbf{e}_3}(R_{k},R_{k}\hat{\bm{\Omega}}_{k},S_{k})\in T^*_{R_{k}}\SO(3).
\end{gather*}
After reduction we obtain:
\begin{gather*}
	\mathsf{F}_d^+(R_k^{-1}R_{k+1},\bm{\Gamma}_k,S_k,S_{k+1})=\frac{h}{2}F_{\mathbf{e}_3}(e,\hat{\bm{\Omega}}_{k+1},S_{k+1})\in\LieAlgebraFont{g}^*,\\
	\mathsf{F}_d^-(R_k^{-1}R_{k+1},\bm{\Gamma}_k,S_k,S_{k+1})=\frac{h}{2}F_{\mathbf{e}_3}(e,\hat{\bm{\Omega}}_{k},S_{k})\in\LieAlgebraFont{g}^*,
\end{gather*}
which finally yields the following discrete friction forces:
\begin{equation}\label{f_fr_discret}
\begin{aligned}
f_d^+(\hat{\bm{\Omega}}_k,\bm{\Gamma}_k,S_k,S_{k+1})&=\frac{h}{2}f(\hat{\bm{\Omega}}_{k+1},\bm{\Gamma}_{k+1},S_{k+1})=-4\pi\mu a^3 h\hat{\bm{\Omega}}_{k+1},\\
f_d^-(\hat{\bm{\Omega}}_k,\bm{\Gamma}_k,S_k,S_{k+1})&=\frac{h}{2}f(\hat{\bm{\Omega}}_{k},\bm{\Gamma}_k,S_k)=-4\pi\mu a^3 h\hat{\bm{\Omega}}_{k}.
\end{aligned} 
\end{equation} 
The presence of $\hat{\bm{\Omega}}_{k+1}$ in $f_d^+(\hat{\bm{\Omega}}_k,\bm{\Gamma}_k,S_k,S_{k+1})$ will ultimately leads to an implicit integrator as we will see below. Using a similar process we obtain that the discrete phenomenological constraint $p_d$ associated to the finite difference map $\varphi$ is given by:
\[
	p_d(\hat{\bm{\Omega}}_k,\bm{\Gamma}_k,S_k,S_{k+1})=\frac{S_{k+1}-S_k}{h}-\frac{8\pi\mu a^3}{T(S_k)}\|\bm{\Omega}_k\| ^2.
\]
From Section \ref{sec:group-difference-map} and our choice for the discrete Lagrangian $\lag_d$, our variational integrator is given by the relations:
\begin{empheq}[left=\empheqlbrace]{align*}
	&(\mathrm{d}\operatorname{cay}_{h\hat{\bm{\Omega}}_k}^{-1})^*D_1\lag(\hat{\bm{\Omega}}_k,\bm{\Gamma}_k,S_k,S_{k+1})\\
	&\qquad =(\mathrm{d}\operatorname{cay}_{-h\hat{\bm{\Omega}}_{k-1}}^{-1})^*D_1\lag(\hat{\bm{\Omega}}_{k-1},\bm{\Gamma}_{k-1},S_{k-1},S_{k})
	-h\mathbf{J}\big(D_2\lag(\hat{\bm{\Omega}}_k,\bm{\Gamma}_k,S_k,S_{k+1})\big)\\
	&\qquad\;\;\;\;+f_d^{-}(\hat{\bm{\Omega}}_k,\bm{\Gamma}_k,S_k,S_{k+1})+f_d^{+}(\hat{\bm{\Omega}}_{k-1},\bm{\Gamma}_{k-1},S_{k-1},S_{k}),\\[2mm]
	& p_d(\hat{\bm{\Omega}}_k,\bm{\Gamma}_k,S_k,S_{k+1})=0,\\[2mm]
	& \bm{\Gamma}_{k+1}=\operatorname{cay}(-h\hat{\bm{\Omega}}_k)\bm{\Gamma}_k,
	%\Longleftrightarrow\hat{\bm{\Gamma}}_{k+1}=\operatorname{cay}(-h\hat{\bm{\Omega}}_k)\hat{\bm{\Gamma}}_k\operatorname{cay}(h\hat{\bm{\Omega}}_k),
\end{empheq}
where the momentum map $\mathbf{J}:T^*M\to\so(3)^*$ is given by $\mathbf{J}(\mathbf{V},\mathbf{W})=[\hat{\bm{\mathbf{V}}},\hat{\bm{\mathbf{W}}}]$. Using the discrete Lagrangian \eqref{discrete_Lagr} and the expression
\[
(\mathrm{d}\operatorname{cay}_{\hat{\bm{\Omega}}}^{-1})^*({\hat{\bm{\Pi }}})=\mathrm{d}\operatorname{cay}_{-\hat{\bm{\Omega}}}^{-1}({\hat{\bm{\Pi }}})=\bigg(\Id+\frac{\hat{\bm{\Omega}}}{2}\bigg)\hat{\bm{\Pi}}\bigg(\Id-\frac{\hat{\bm{\Omega}}}{2}\bigg), 
\]
our variational integrator is explicitly given by:
\begin{empheq}[left=\empheqlbrace]{align}
	&\frac{\hat{\bm{\Pi}}_{k+1}-\hat{\bm{\Pi}}_{k}}{h} +  \frac{1}{2}\left([\hat{\bm{\Omega}}_{k+1},\hat{\bm{\Pi}}_{k+1}]+[\hat{\bm{\Omega}}_{k},\hat{\bm{\Pi}}_{k}]\right)-\frac{h}{4}(\hat{\bm{\Omega}}_{k+1}\hat{\bm{\Pi}}_{k+1}\hat{\bm{\Omega}}_{k+1}-\hat{\bm{\Omega}}_{k}\hat{\bm{\Pi}}_{k}\hat{\bm{\Omega}}_{k})\notag\\
	&\hspace{6.5cm}-mg\ell[\hat{\bm{\Gamma}}_{k+1},\hat{\bm{\chi}}]+4\pi\mu a^3( {\hat{\bm{\Omega}}_{k+1}+\hat{\bm{\Omega}}_k})=0,\label{eq:variational-integrator-1}\\
	& S_{k+1}=S_k+\frac{8\pi\mu a^3 h}{T(S_k)}\|\bm{\Omega}_k\| ^2 ,\label{eq:variational-integrator-2}\\
	& \bm{\Gamma}_{k+1}=\bigg(\Id+\frac{\hat{\bm{\Omega}}_k}{2}\bigg)^{-1}\bigg(\Id-\frac{\hat{\bm{\Omega}}_k}{2}\bigg)\bm{\Gamma}_{k}\label{eq:variational-integrator-3},
\end{empheq}
where $\bm{\Pi}_k= \mathbb{I}\bm{\Omega}_k$ is the discrete angular momentum. {Note that the first equation is written in $ \mathfrak{so}(3)$. However, it can be easily rewritten in $ \mathbb{R}  ^3 $ by using the definition \eqref{hat_map} of the hat map , as well as the formula $\hat{\bm{\Omega}}\hat{\bm{\Pi}}\hat{\bm{\Omega}}=\hat{ \mathbf{v} }$, for $ \mathbf{v} =\boldsymbol{\Omega} \times (\boldsymbol{\Pi} \times\boldsymbol{\Omega} )- |\boldsymbol{\Omega} | ^2\boldsymbol{\Pi} $.} These relations hold for $k\in\{0,\dots,N-1\}$. Given the input $\hat{\bm{\Omega}}_k$, $\hat{\bm{\Gamma}}_k$ and $S_k$, one step of the variational integrator outputs {$\hat{\bm{\Omega}}_{k+1}$, $\hat{\bm{\Gamma}}_{k+1}$ and $S_{k+1}$} as follows. Firstly notice that the new value $S_{k+1}$ can be computed from \eqref{eq:variational-integrator-2} whenever we want since it only depends on the previous value $\hat{\bm{\Omega}}_k$ of the body angular velocity and the previous value $S_k$ of the entropy. From \eqref{eq:variational-integrator-3}, the same is true for the new value $\hat{\bm{\Gamma}}_{k+1}$ of the advected parameter. However, this new value $\hat{\bm{\Gamma}}_{k+1}$ is needed to compute the new value $\hat{\bm{\Omega}}_{k+1}$ of the body angular velocity, as can be seen from \eqref{eq:variational-integrator-1}, which is a nonlinear equation in $\hat{\bm{\Omega}}_{k+1}$ that we solve using a Newton-Krylov method. Note that in the absence of thermal effects and torque, we recover the variational integrator presented in \citet[Section 4.1.2]{GMPMD2011}.

\begin{remark}
	Note that in this particular example the entropy equation \eqref{eq:variational-integrator-2} is totally decoupled from the momentum equation \eqref{eq:variational-integrator-1}. In order to have a fully coupled physical model, one could think of the top as changing its mass repartition as the temperature is changing (like when one boils an egg); this amounts to make the inertia tensor $\mathbb{I}$ depend on the entropy $S$. 
\end{remark}

The discrete total energy is defined by:
\[
	e_k=e(\hat{\bm{\Omega}}_k,\bm{\Gamma}_k,S_k)=\frac{1}{2}\mathbb{I}\bm{\Omega}_k\cdot\bm{\Omega}_k+mg\ell\bm{\Gamma}_k\cdot\bm{\chi}+U_\mathcal{B}(S_k).%=\frac{1}{2}\left\langle\hat{\bm{\Omega}}_k,\widehat{\mathbb{I}{\bm{\Omega}}_k}\right\rangle+mg\ell\langle\hat{\bm{\Gamma}}_k,\hat{\bm{\chi}}\rangle+U_\mathcal{B}(S_k).
\]

From Section \ref{sec:group-difference-map} we also obtain a discrete Kelvin-Noether theorem for our system. From \eqref{I_d}, \eqref{DKN} and the expressions \eqref{f_fr_discret} for the discrete friction forces \eqref{f_fr_discret}, we get a relation describing the rate of change of the discrete spatial angular momentum for $k\in\{1,\dots,N\}$:
\begin{equation}\label{DKN_example} 
	I_k-I_{k-1}=-8\pi\mu a^3h^2\langle\hat{\bm{\Gamma}}_k,\hat{\bm{\Omega}}_k\rangle,
\end{equation} 
where
\[
	I_k=\left\langle\hat{\bm{\Gamma}}_k,\mathrm{d}\operatorname{cay}_{ -h\hat{\bm{\Omega}}_k}^{-1}(h\hat{\bm{\Pi}}_k)\right\rangle
	=\left\langle R_k\Big[\mathrm{d}\operatorname{cay}_{ -h\hat{\bm{\Omega}}_k}^{-1}(h\hat{\bm{\Pi}}_k)\Big]R_k^{-1},\hat{\mathbf{e}}_3\right\rangle,
\]
since $ \hat{\boldsymbol{\Gamma}} _k = \widehat{R_k ^{-1} \mathbf{e} _3}=R_k^{-1}\hat{\mathbf{e}}_3 R_k$. {We note that the discrete Noether theorem \eqref{DKN_example} is a relation that approximates \eqref{KN_example} and which is exactly verified by the solution of the integrator \eqref{eq:variational-integrator-1}--\eqref{eq:variational-integrator-3}.}

\subsection{Numerical simulation}

The parameters for the numerical simulation are as follows: $h=\SI{0.1}{s}$, $a=\SI{0.05}{m}$, $\mu=\SI{0.1}{kg.m^{-1}.s^{-1}}$ (motor oil), $\rho=\SI{2700}{kg.m^{-3}}$ (aluminium), $M=\SI{26.981539e-3}{kg.mol^{-1}}$ (aluminium). The total mass of the ball is $m=\frac{4}{3}\pi a^3\rho$. We assume that the ball is made of two hemispheres, the upper one is plain and has a mass $m_1=0.6m$, the lower one is hollow and has a mass  $m_2=m-m_1$. With this choice one computes that the center of mass of the heavy top is $G=\big(0,0,\frac{3m_1a}{8m}-\frac{m_2a}{2m}\big)$ from which we compute $\ell$ and $\bm{\chi}$. We also compute the moment of inertia tensor of the ball by summing the inertia tensors of the two hemispheres:
\[
	\mathbb{I}=
	\begin{pmatrix}
		\frac{83a^2}{320}m_1  & 0 & 0\\
		0 & \frac{83a^2}{320}m_1  & 0\\
		0 & 0 & \frac{2a^2}{5}m_1 
	\end{pmatrix}
	+
	\begin{pmatrix}
		\frac{a^2}{12}m_2  & 0 & 0\\
		0 & \frac{a^2}{12}m_2  & 0\\
		0 & 0 & \frac{a^2}{3}m_2
	\end{pmatrix}.
\]
Concerning the initial conditions, we place the heavy top such that the plain hemisphere lies on the positive $z$ axis and the hollow hemisphere lies on the negative $z$ axis, $R_0=\left(\begin{smallmatrix}1 & 0 & 0\\ 0 & 0 & -1\\ 0 & 0 & 1\end{smallmatrix}\right)$, $\bm{\Omega}_0=(0,1,1)$, $\bm{\Gamma}_0=R_0^{-1}\mathbf{e}_3$, $T_0=\SI{300}{K}$, $S_0=\SI{0}{kg.m^2.s^{-2}.K^{-1}}$ and $N_0=\frac{m}{M}$ (will be constant during the simulation).

Our simulation yields the following trajectory $\ell R\bm{\chi}$ for the center of mass:

\begin{figure}[H]
	\centering
	\includegraphics[width=\textwidth]{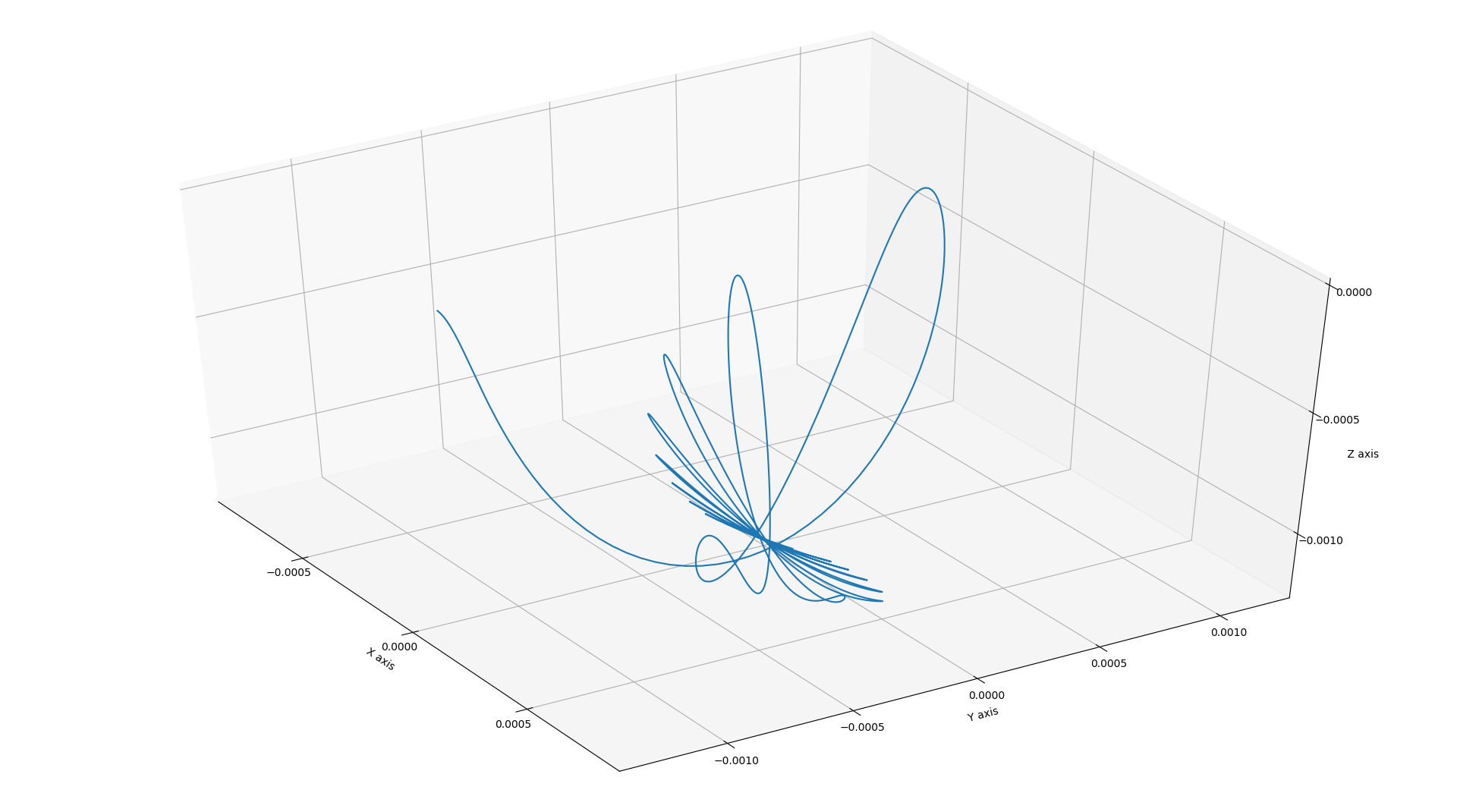}
	\caption{The trajectory of the center of mass of the heavy top.}
\end{figure}

{As expected, the angular velocity $\bm{\Omega}$ tends to zero and the center of mass oscillates around its limiting value $-\ell\mathbf{e}_3$, since the plain hemisphere is more massive.}

For the purpose of benchmarking, we used in parallel to our variational integrator the standard Runge-Kutta method of order 2, as without thermal effects our integrator can be seen to have order 2 \cite[Theorem 4.7.1]{BoRa2007}. The curves for the kinetic, potential and internal energies exhibit the following profiles:

\begin{figure}[H]
	\centering
	\includegraphics[width=\textwidth]{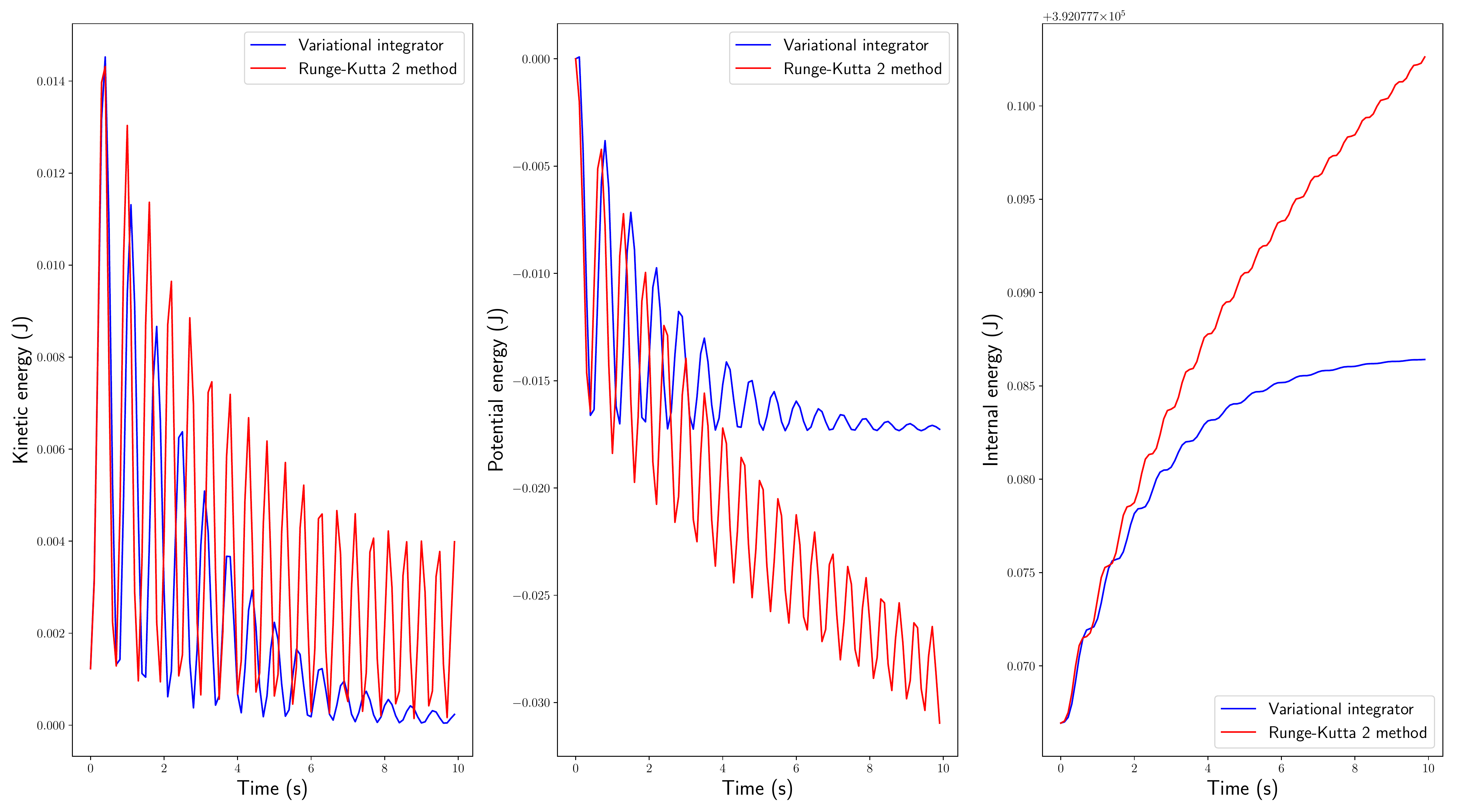}
	\label{fig:energies}
	\caption{The different energies of the system.}
\end{figure}

Note the difference between the height of the center of mass in both methods (which is proportional to the potential energy). The higher the viscosity, the less apparent is the difference, as higher viscosity means that the system is subject to more friction, and that its dynamics is less chaotic. The most interesting aspect is the behavior of the total energy of the system:

\begin{figure}[H]
	\centering
	\includegraphics[width=\textwidth]{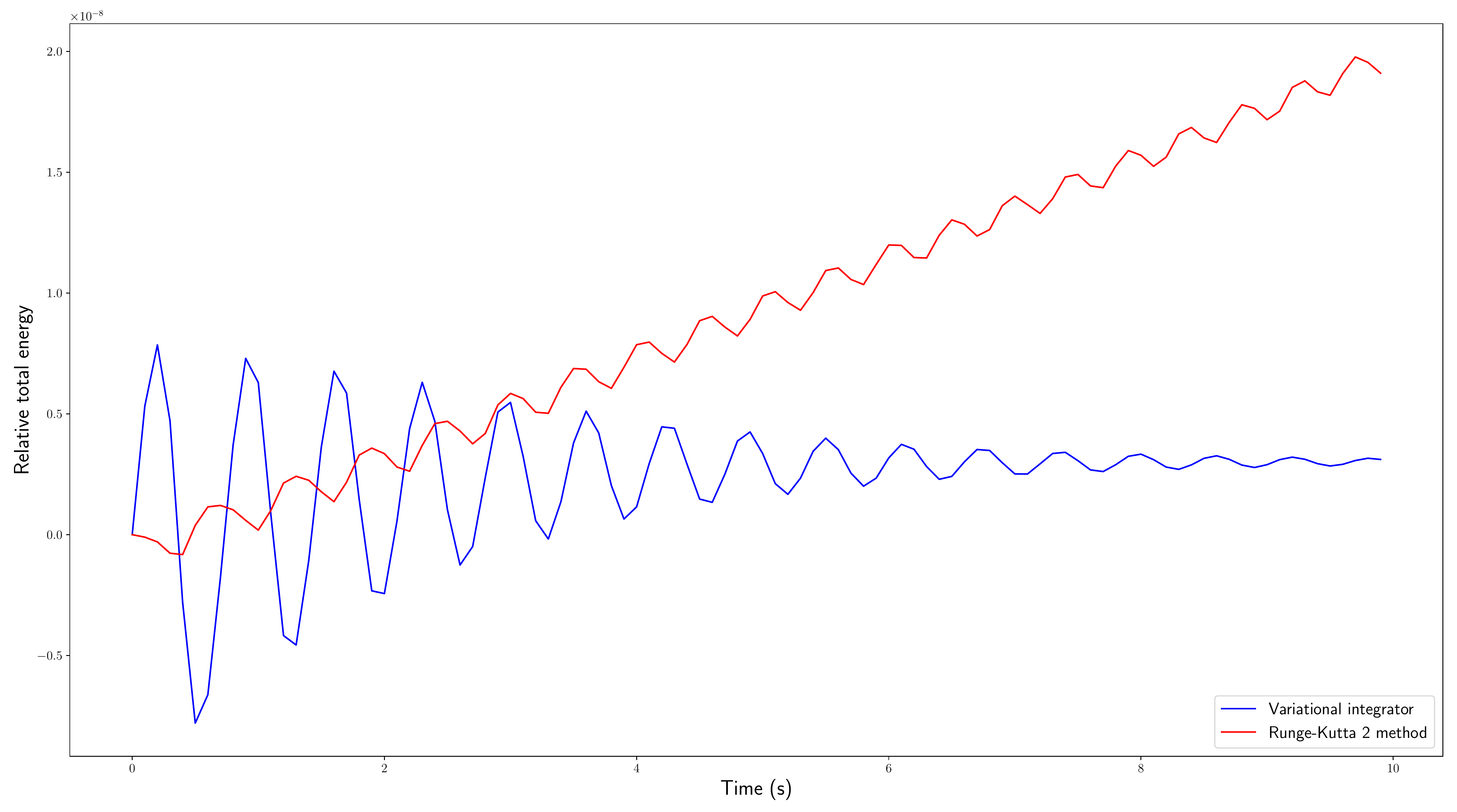}
	\label{fig:total-energy}
	\caption{The relative total energy of the system. While the Runge-Kutta 2 method yields an increase in the total energy, our variational integrator displays the usual oscillatory behaviour until the system stops moving, even with a large time step.}
\end{figure}

Remember from the previous section that the total energy of the system is constant (see the green line above). In a similar way to the variational integrators in Lagrangian mechanics, our integrator exhibits the oscillatory behavior around the true value of the total energy. Concerning the entropy of the system, it is in accordance with the second law of thermodynamics: the entropy increases as the system experiences an irreversible process. We tested several initial conditions and observed each time the expected oscillatory total energy behavior around the exact value, in accordance with the first law of thermodynamics. The variational integrator thus captures well the conversion of mechanical into thermal energy.

\bigbreak\noindent\textbf{Conclusion and outlook:} In this article we have presented the continuous and discrete variational formulations of simple thermodynamical systems on (finite dimensional) Lie groups. On the continuous side, we applied the variational formulation of \cite{GBYo2017a} to the case when the configuration manifold is a finite dimensional Lie group and, by assuming symmetries, we extended to the thermodynamical setting the well-known process of Euler-Poincar\'e reduction for mechanical systems on Lie groups. Based on these developments, and following  \cite{GBYo2017c}, we deduced a variational discretization for such thermodynamical systems, that extends earlier variational integrators for mechanical systems on Lie groups. We then illustrate the good behavior of the variational scheme on the example of a heavy top in a Stokes flow. This example only illustrates a simplified situation of the general variational setting that we developed in the paper. The next step is to leverage this integrator and apply it to more complicated settings such as fluids in the presence of irreversible processes (viscosity, heat conduction), which exhibit a complete coupling of the mechanical and thermal equations. In order to achieve this goal, the configuration space, which is an infinite-dimensional Lie group of diffeomorphisms, has to be discretized into a finite-dimensional one first. This can be done in two different ways at least: with the help of the \emph{sine-bracket approach} for two dimensional incompressible fluids on the torus, see \cite{Ze1991}, or with the help of \emph{discrete diffeomorphism groups} for incompressible, see \cite{PaMuToKaMaDe2011}, and compressible fluids, see \cite{BaGB2018}. We will also need to discretize the phenomenological constraint in this particular setting, which according to preliminary work, proves to be difficult. An important point of interest is that in the case of compressible fluids, the discrete evolution equations will be fully coupled.

\bigbreak\noindent\textbf{Acknowledgments:} The authors were financed by the ANR project GEOMFLUID (ANR-14-CE23-0002). We thank H. Yoshimura for very helpful comments. The first author thanks S. Shamekh for helpful discussions concerning the example.

\end{document}